\theoremstyle{plain}
\theoremstyle{definition}
\newtheorem{theorem}{Theorem}[section]
\newtheorem{lemma}[theorem]{Lemma}
\newtheorem{corollary}[theorem]{Corollary}
\newtheorem{note}[theorem]{Note}
\theoremstyle{remark}
\numberwithin{equation}{section}
\newcommand{\SP}{\: \: \: \: \:}
\title{Possible heights of Alexandroff square transformation groups}
\author[F. Ayatollah Zadeh Shirazi, F. Ebrahimifar, R. Yaghmaeian, H.Yahyaoghli]{Fatemah Ayatollah Zadeh Shirazi, \\
Fatemeh Ebrahimifar, \\ 
Reza Yaghmaeian, \\
Hamed Yahyaoghli}
\begin{document}
\begin{abstract} 
In the following text we compute possible heights of $\mathbb A$
(Alexandroff square), $\mathbb O$ (unit square $[0,1]\times[0,1]$
with lexicographic order topology) and $\mathbb U$ (unit square
$[0,1]\times[0,1]$ with induced topology of Euclidean plane). We
prove $P_h(\mathbb{A})=\{n:n\geq5\}\cup\{+\infty\}$, $P_h(\mathbb{O})=\{n:n\geq4\}\cup\{+\infty\}$, $P_h(\mathbb{U})=\{n:n\geq1\}\cup\{+\infty\}$ (where for
topological space $X$, by $P_h(X)$ we mean the collection of
heights of transformation groups with phase space $X$.
In this way we also prove that there is not any topological transitive
(resp. Devaney chaotic) Alexandroff square transformation group.
\end{abstract}
\maketitle
\noindent {\small {\bf 2010 Mathematics Subject Classification:}  54H15, 54H20 \\
{\bf Keywords:}} Alexandroff square, height, orbit space, transformation group.
\section{Introduction}
\noindent Studying closed unit ball $\{<x,y>\in\mathbb{R}^2:
x^2+y^2\leq1\}$ with induced topology of Euclidean plane $\mathbb{R}^2$
is one of the main purposes of so many text (old and new) (see e.g.,
\cite{rudin, zhu}.
Let's mention that unit disk and unit square $[0,1]\times[0,1]$
with induced topology of Euclidean plane, are homeomorph. 
\\
On the other hand many texts deal with dynamical properties 
of special topological spaces \cite{fort, graph}. 
In the following text we have a comparative study on dynamical properties of unit square 
transformation groups with emphasize on their heights (and obit spaces), where unit square $[0,1]\times[0,1]$ equipped with Euclidean topology,
lexicographic order topology, Alexandroff square topology. For more convenient suppose (by $<x,y>$ we mean the 
ordered pair $\{x,\{x,y\}\}$):
\begin{itemize}
\item $\mathbb{A}$ is $[0,1]\times[0,1]$ as Alexandroff square.
\item $\mathbb{O}$ is $[0,1]\times[0,1]$ equipped with lexicographic order topology,
\item $\mathbb{U}$ is $[0,1]\times[0,1]$ equipped with Euclidean plane ${\mathbb R}^2$,
\end{itemize}
Where for $<x,y>,<s,t>\in[0,1]\times[0,1]$ we define lexicographic order $\preceq_\ell$ with
$<x,y>\preceq_\ell<s,t>$ if and only if ``$x<s$'' or ``$x=s$ and $y\leq t$''.
Alexandroff square $\mathbb{A}=[0,1]\times[0,1]$
equipped with topological basis generated by~\cite{counter}:
\\
$\bullet$ $\{x\}\times U$ where $x\in[0,1]$ and $U$ is an open subset of $[0,1]$ (with
induced topology of Euclidean line $\mathbb R$) and $x\notin U$, 
\\
$\bullet$ $([0,1]\times U)\setminus(\{x_1,\ldots,x_n\}\times[0,1])$ where $U$ is an open subset of $[0,1]$ (with
induced topology of Euclidean line $\mathbb R$).
\\
As it has been mentioned in~\cite{counter}, $\mathbb{A}$ and $\mathbb O$ are compact Hausdorff non--metrizable spaces.
Consider the following notations and sets 
(for $x,y\in\mathbb{R}$ let $(x,y)=\{z\in\mathbb{R}:x<z<y\}$):

$\Delta:=\{<x,x>:x\in[0,1]\}$;

$\mathsf{P}_1:=<0,0>, \mathsf{P}_2:=<0,1>, \mathsf{P}_3:=<1,1>, \mathsf{P}_4:=<1,0>$;

$\mathsf{L}_1:=\{0\}\times(0,1), \mathsf{L}_2:=(0,1)\times\{1\}, \mathsf{L}_3:=\{1\}\times(0,1), \mathsf{L}_4:=(0,1)\times\{0\}$.
\\
$\:$
\begin{center} {\tiny
\unitlength 0.4mm 
\linethickness{0.4pt}
\ifx\plotpoint\undefined\newsavebox{\plotpoint}\fi 
\begin{picture}(101.75,99.25)(0,0)
\put(7.75,5.5){\framebox(90,90)[]{}}
\put(99.75,21.5){\line(0,1){0}}
\put(98,5.5){\circle*{2.5}}
\put(7.75,6){\circle*{2.5}}
\put(8.25,95.25){\circle*{2.5}}
\put(97.75,95.25){\circle*{2.5}}
\put(97.25,95.75){\line(-1,-1){89.75}}
\put(7.5,6){\line(0,1){0}}
\put(47.25,52){\makebox(0,0)[cc]{$\Delta$}}
\put(4.75,4){\makebox(0,0)[cc]{$\mathsf{P}_1$}}
\put(3.75,97.25){\makebox(0,0)[cc]{$\mathsf{P}_2$}}
\put(100.75,97.75){\makebox(0,0)[cc]{$\:\:\mathsf{P}_3$}}
\put(101.75,4.75){\makebox(0,0)[cc]{$\:\:\mathsf{P}_4$}}
\put(1,48.25){\makebox(0,0)[cc]{$\mathsf{L}_1$}}
\put(51.75,99.25){\makebox(0,0)[cc]{$\mathsf{L}_2$}}
\put(101.5,54){\makebox(0,0)[cc]{$\:\:\mathsf{L}_3$}}
\put(48.5,2.75){\makebox(0,0)[cc]{$\mathsf{L}_4$}}
\end{picture}
}
\\
Fig. 1
\end{center}
\subsection*{Background on transformation groups}
By a (topological) transformation group $(G,X,\rho)$
or simply $(G,X)$ we mean a compact Hausdorff topological
space $X$ (phase space), discrete topological group $G$
(phase group) with identity $e$ and continuous map
$\rho:G\times X\to X$, $\rho(g,x)=gx$ ($g\in G,x\in X$) such
that for all $x\in X$ and $g,h\in G$ we have $ex=x$ and $g(hx)=(gh)x$.
note to the fact that for all $g\in G$, $\rho_g:X\to X$ with 
$\rho_g(x)=gx$  is a homeomorphism on $X$, and 
$\rho_g\rho_h=\rho_{gh}$ we may suppose $G$ is a group of 
self--homeomorphisms of $X$ under the operation of composition
of maps. In transformation group $(G,X)$ for $x\in X$
we call $Gx:=\{gx:g\in G\}$ the orbit of $x$ (under $G$) and
$\frac{X}{G}:=\{Gy:y\in X\}$ the orbit space of $(G,X)$, also
we say nonempty subset $D$ of $X$ is invariant ($G-$invariant)
if $GD:=\{gy:g\in G,y\in D\}\subseteq D$,
for more details on transformation groups (and orbit spaces)
see~\cite{ellis, kuwa}. In topological space $X$ suppose $\mathcal{G}_X$ is the collection of all homeomorphisms
$h:X\to X$. 
\\
Closed and open invariant subsets of a transformation group $(G,X)$
have important role in studying its dynamical properties
(see e.g. \cite{vesnik} in transitivity in transformation groups). In transformation group $(G,X)$ let $h(G,X):=\sup\{n\geq0:$ there exist closed invariant 
subsets $D_0,\ldots,D_n$ of $X$ with $\varnothing\neq D_0\varsubsetneq D_1\varsubsetneq\cdots\varsubsetneq D_n=X\}$,
i,e., $h(G,X)=+\infty$ if $\{\overline{Gx}:x\in X\}$ is infinite and $h(G,X)=card(\{\overline{Gx}:x\in X\})-1$ otherwise~\cite{golestani}.
Also we call $P_h(X):=\{h(G,X):G$ is a subgroup of $\mathcal{G}_X\}$ the collection of all possible heights of $X$. In transformation
group $(G,X)$ the map $\varphi:\{\overline{Gy}:y\in X\}\to
\{\overline{\mathcal{G}_Xy}:y\in X\}$ with $\varphi(\overline{Gy})=
\overline{\mathcal{G}_Xy}$ (for $y\in X$) is onto, so
$h(\mathcal{G}_X,X)\leq h(G,X)$ therefore
$\min P_h(X)=h(\mathcal{G}_X,X)$.
\section{Computing $\dfrac{\mathbb A}{\mathcal{G}_{\mathbb A}}$,
$\dfrac{\mathbb O}{\mathcal{G}_{\mathbb O}}$ and
$\dfrac{\mathbb U}{\mathcal{G}_{\mathbb U}}$}
\noindent By considering the definition of height of transformation group $(G,X)$ it's evident that for computing $h(G,X)$ one may compute $\{\overline{Gy}:y\in X\}$, so one may begin with computing $\frac{X}{G}=\{Gy:y\in X\}$. Hence by $\min P_h(X)=h(\mathcal{G}_X,X)$, for obtaining $P_h(X)$ one may pay attention to  $h(\mathcal{G}_X,X)$
and finding out $\frac{X}{\mathcal{G}_X}$. 
In this section we pay attention to $\frac{X}{\mathcal{G}_X}$ 
for $X=\mathbb{U},\mathbb{O},\mathbb{A}$.
\begin{lemma}\label{salam10}
For homeomorphism $\mathfrak{a}:\mathbb{A}\to\mathbb{A}$ we have:
\\
1. $\mathfrak{a}(\{\mathsf{P}_1,\mathsf{P}_3\})=\{\mathsf{P}_1,\mathsf{P}_3\}$ and $\mathfrak{a}(\Delta)=\Delta$;
\\
2. $\mathfrak{a}(\mathsf{L}_2\cup\mathsf{L}_4\cup\{\mathsf{P}_2,\mathsf{P}_4\})=\mathsf{L}_2\cup\mathsf{L}_4\cup\{\mathsf{P}_2,\mathsf{P}_4\}$,
\\
3. for all $s\in[0,1]$ there exists $t\in[0,1]$ with $\mathfrak{a}(\{s\}\times[0,1])=\{t\}\times[0,1]$ and $\mathfrak{a}\{<s,0>,<s,1>\}=\{<t,0>,<t,1>\}$;
\\
4. exactly one the following cases occurs:
	\begin{itemize}
	\item[a.] $\mathfrak{a}(\mathsf{P}_i)=\mathsf{P}_i$ for $i=1,2,3,4$,  $\mathfrak{a}(\mathsf{L}_1)=\mathsf{L}_1$ and  $\mathfrak{a}(\mathsf{L}_3)=\mathsf{L}_3$;
	\item[b.] $\mathfrak{a}(\mathsf{P}_1)=\mathsf{P}_3, \mathfrak{a}(\mathsf{P}_2)=\mathsf{P}_4, \mathfrak{a}(\mathsf{P}_3)=\mathsf{P}_1, \mathfrak{a}(\mathsf{P}_4)=\mathsf{P}_2$,
	$\mathfrak{a}(\mathsf{L}_1)=\mathsf{L}_3$ and $ \mathfrak{a}(\mathsf{L}_3)=\mathsf{L}_1$.
	\end{itemize}
\end{lemma}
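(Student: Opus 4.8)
The plan is to extract, step by step, purely topological invariants of $\mathbb A$ that any self-homeomorphism $\mathfrak a$ must respect, and to read off the four assertions from them. I would first isolate the diagonal $\Delta$ by a \emph{local} invariant. A direct inspection of the basis shows that an off-diagonal point $<a,b>$ (with $a\neq b$) has the Type-I sets $\{a\}\times(b-\frac1n,b+\frac1n)$ as a countable neighbourhood base, so it is a point of first countability; whereas a diagonal point $<x,x>$ is not first countable, because given any countable family of its neighbourhoods, each contains a set $([0,1]\times U)\setminus(F\times[0,1])$ with $F$ finite, and choosing $a\in[0,1]$ outside $\{x\}$ and outside the (countable) union of these finite sets produces the neighbourhood $([0,1]\times U)\setminus(\{a\}\times[0,1])$ that contains none of them. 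Hence $\Delta$ is exactly the set of non-first-countable points, and it is preserved: $\mathfrak a(\Delta)=\Delta$. For the remaining part of (1), I would compute the subspace topology on $\Delta$; intersecting the basis with $\Delta$ gives the open sets $\{<x,x>:x\in U\setminus F\}$ for $U$ Euclidean-open and $F$ finite. In this topology $[0,1]$ is still connected, removing an interior value disconnects it, while removing $0$ or $1$ does not, so $\mathsf P_1=<0,0>$ and $\mathsf P_3=<1,1>$ are precisely the non-cut points of $\Delta$ and must be permuted among themselves, giving $\mathfrak a(\{\mathsf P_1,\mathsf P_3\})=\{\mathsf P_1,\mathsf P_3\}$.

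The heart of the argument is (3), where I would analyse the connected components of $\mathbb A\setminus\Delta$. Since each $\{a\}\times([0,1]\setminus\{a\})$ is clopen in $\mathbb A\setminus\Delta$ (built from Type-I sets, with clopen complement), distinct vertical lines lie in distinct components, and within a line the subspace topology is the Euclidean one on $[0,1]$; so the components are the half-lines $\{a\}\times[0,a)$ and $\{a\}\times(a,1]$ for $a\in(0,1)$, together with the single pieces $\{0\}\times(0,1]$ and $\{1\}\times[0,1)$. A computation of closures shows $\overline{C}\cap\Delta$ is a single point, the ``apex'' $<a,a>$ of the line containing $C$, so the components sharing a given apex are exactly the one or two halves of one vertical line, and the union of their closures is the whole line $\{a\}\times[0,1]$. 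As $\mathfrak a$ is a homeomorphism of $\mathbb A\setminus\Delta$ preserving $\Delta$, it permutes components and preserves apexes; sending $<a,a>$ to some $<t,t>$, it therefore maps $\{a\}\times[0,1]$ onto $\{t\}\times[0,1]$. Finally, since each vertical line carries the Euclidean topology of $[0,1]$, the restriction $\mathfrak a|_{\{s\}\times[0,1]}$ is a homeomorphism of $[0,1]$ and must fix or exchange its endpoints, which are exactly $<s,0>$ and $<s,1>$; this yields $\mathfrak a\{<s,0>,<s,1>\}=\{<t,0>,<t,1>\}$.

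With (3) in hand, (2) and (4) are bookkeeping. Taking the union over $s$ of the endpoint pairs gives $\mathfrak a([0,1]\times\{0,1\})=[0,1]\times\{0,1\}$; intersecting with the invariant set $\Delta$ removes exactly the corners $\mathsf P_1,\mathsf P_3$, and the complement of these within the two horizontal edges is precisely $\mathsf L_2\cup\mathsf L_4\cup\{\mathsf P_2,\mathsf P_4\}$, proving (2). For (4), the edge vertical lines $V_0=\{0\}\times[0,1]$ and $V_1=\{1\}\times[0,1]$ are the only ones whose apex is a corner (equivalently, whose off-diagonal part is connected), so $\mathfrak a$ permutes $\{V_0,V_1\}$, and the two cases $\mathfrak a(V_0)=V_0$ and $\mathfrak a(V_0)=V_1$ are mutually exclusive and exhaustive. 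In the first, $\mathfrak a(\mathsf P_1)=\mathsf P_1$ forces, by tracking the Euclidean endpoints of $V_0$ and $V_1$, all four corners to be fixed together with $\mathfrak a(\mathsf L_1)=\mathsf L_1$ and $\mathfrak a(\mathsf L_3)=\mathsf L_3$, which is (a); in the second, the same endpoint-tracking produces the swap described in (b).

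The main obstacle is concentrated in the first two steps: pinning down $\Delta$ and then the component/apex structure against the deliberately asymmetric, non-first-countable Alexandroff topology, in particular verifying that distinct vertical lines are separated by clopen sets while the diagonal reglues the two halves of a single line. Once those invariants are secured, every subsequent deduction is a routine restriction-to-$[0,1]$ argument.
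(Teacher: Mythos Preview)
Your proof is correct, but the route through parts (2) and (3) is genuinely different from the paper's. Part (1) is handled the same way in both: first countability isolates $\Delta$, and the induced topology on $\Delta$ (which, as you note, is just the Euclidean one since $U\setminus F$ ranges over all Euclidean open sets) picks out $\mathsf P_1,\mathsf P_3$ as the non-cut points.

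For (2) the paper does \emph{not} go through (3). Instead it gives an independent local invariant: a point $\mathfrak x$ lies in $\mathsf L_2\cup\mathsf L_4\cup\{\mathsf P_2,\mathsf P_4\}$ if and only if $\mathfrak x$ admits a countable local base $\{B_n\}$ with every $B_n\setminus\{\mathfrak x\}$ connected. With (1) and (2) already in hand, the paper then proves (3) by a direct separation argument: knowing $\mathfrak a\langle s,0\rangle,\mathfrak a\langle s,1\rangle$ have second coordinate in $\{0,1\}$, and that $\mathfrak a(\{s\}\times[0,1])$ meets $\Delta$ in a single point $\langle t,t\rangle$, one shows any point of the image with first coordinate $u\neq t$ would yield a disconnection of $\mathfrak a(\{s\}\times[0,1])$ using the clopen set $\{u\}\times([0,1]\setminus\{u\})$.

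You instead reverse the dependency: you extract (3) first from the component structure of $\mathbb A\setminus\Delta$ together with the ``apex'' map $C\mapsto\overline C\cap\Delta$, and then read off (2) as $[0,1]\times\{0,1\}$ minus the already-invariant corners. What your approach buys is economy---one structural observation (clopenness of $\{a\}\times([0,1]\setminus\{a\})$ and the single-point closure on $\Delta$) does all the work, and no second local invariant is needed. What the paper's approach buys is a sharper pointwise description of the horizontal edges, which it later reuses verbatim for $\mathbb O$ in Lemma~\ref{salam20}. Part (4) is essentially the same case split in both arguments.
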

\begin{proof}
{\bf 1.} Using 
	the fact that $\mathbb{A}$ has a local countable topological basis on $\mathfrak{x}\in\mathbb{A}$ if and only if 
	$\mathfrak{x}\in\mathbb{A}\setminus\Delta$, we have $\mathfrak{a}(\Delta)=\Delta$.
	Note that subspace topology on $\Delta$ induced by $\mathbb{A}$
	coincides with subspace topology on $\Delta$ induced by $\mathbb{U}$
	hence $\mathfrak{a}(\{\mathsf{P}_1,\mathsf{P}_3\})=\{\mathsf{P}_1,\mathsf{P}_3\}$.
\\
{\bf 2.} $\mathbb{A}$ has a local countable topological basis at $\mathfrak{x}\in\mathbb{A}$ like $\{B_n:n\geq1\}$ such that all elements of
	$\{B_n\setminus\{\mathfrak{x}\}:n\geq1\}$ are connected if and only if 
	$\mathfrak{x}\in\mathsf{L}_2\cup\mathsf{L}_4\cup\{\mathsf{P}_2,\mathsf{P}_4\}$.
\\
{\bf 3.} Consider $s\in[0,1]$, using (1) and (2) we have 
	$<a,b>:=\mathfrak{a}<s,0>,<c,d>:=\mathfrak{a}<s,1>\in\mathsf{L}_2\cup\mathsf{L}_4\cup\{\mathsf{P}_i:1\leq i\leq4\}$.
	So $b,d\in\{0,1\}$. Choose $x\in[0,1]$ and suppose $<u,v>:=\mathfrak{a}<s,x>$. By item (1), 
	$\mathfrak{a}(\{s\}\times[0,1])\cap\Delta=\mathfrak{a}<s,s>=:<t,t>$ if $u\neq t$ then $<u,u>\notin\mathfrak{a}(\{s\}\times[0,1])$, let:
	\begin{eqnarray*}
	U&:= & (\{u\}\times([0,1]\setminus\{u\}))\cap \mathfrak{a}(\{s\}\times[0,1])=(\{u\}\times[0,1])\cap \mathfrak{a}(\{s\}\times[0,1])\:,  \\ 
	V& := & (\mathbb{A}\setminus(\{u\}\times[0,1]))\cap \mathfrak{a}(\{s\}\times[0,1])\:, 
	\end{eqnarray*}
	then $U,V$ is a separation of $\mathfrak{a}(\{s\}\times[0,1])$ ($\mathfrak{a}<s,x>\in U$ and $\mathfrak{a}<s,s>\in V$)
	which is in contradiction with connectedness of $\mathfrak{a}(\{s\}\times[0,1])$. Thus $u=t$ and 
	$\mathfrak{a}<s,x>\in\{t\}\times[0,1]$ for all $x\in[0,1]$., so $\mathfrak{a}(\{s\}\times[0,1])\subseteq\{t\}\times[0,1]$.
	In particular $a=c=t$, so $<t,b>=\mathfrak{a}<s,0>,<t,d>=\mathfrak{a}<s,1>$ with $b,d\in\{0,1\}$
	(since $\mathfrak{a}<s,0>\neq\mathfrak{a}<s,1>$ we have $b\neq d$). Thus
	$\mathfrak{a}\restriction_{\{s\}\times[0,1]}:\{s\}\times[0,1]\to\{t\}\times[0,1]$ is a continuous map
	with $<t,0>,<t,1>\in\mathfrak{a}(\{s\}\times[0,1])$ which completes the proof.
\\
{\bf 4.} First suppose $\mathfrak{a}(\mathsf{P}_1)=\mathsf{P}_1$, then by (1), $\mathfrak{a}(\mathsf{P}_3)=\mathsf{P}_3$,
so by (3) we have $\mathfrak{a}(\mathsf{L}_1)=\mathsf{L}_1$, $\mathfrak{a}(\mathsf{L}_3)=\mathsf{L}_3$,
$\mathfrak{a}(\mathsf{P}_2)=\mathsf{P}_2$ and $\mathfrak{a}(\mathsf{P}_4)=\mathsf{P}_4$.
\\
Now suppose $\mathfrak{a}(\mathsf{P}_1)\neq\mathsf{P}_1$, then by (1), $\mathfrak{a}(\mathsf{P}_1)=\mathsf{P}_3$
and $\mathfrak{a}(\mathsf{P}_3)=\mathsf{P}_1$
so by (3) we have $\mathfrak{a}(\mathsf{L}_1)=\mathsf{L}_3$, $\mathfrak{a}(\mathsf{L}_3)=\mathsf{L}_1$,
$\mathfrak{a}(\mathsf{P}_2)=\mathsf{P}_4$ and $\mathfrak{a}(\mathsf{P}_4)=\mathsf{P}_2$.
\end{proof}
\begin{lemma}\label{salam20}
For homeomorphism $\mathfrak{o}:\mathbb{O}\to\mathbb{O}$ we have:
\\
1. $\mathfrak{o}:\mathbb{O}\to\mathbb{O}$ is order preserving or anti--order preserving;
\\
2. $\mathfrak{o}(\{\mathsf{P}_1,\mathsf{P}_3\})=\{\mathsf{P}_1,\mathsf{P}_3\}$;
\\
3. $\mathfrak{o}(\mathsf{L}_2\cup\mathsf{L}_4\cup\{\mathsf{P}_2,\mathsf{P}_4\})=\mathsf{L}_2\cup\mathsf{L}_4\cup\{\mathsf{P}_2,\mathsf{P}_4\}$,
\\
4. for all $s\in[0,1]$ there exists $t\in[0,1]$ with $\mathfrak{o}(\{s\}\times[0,1])=\{t\}\times[0,1]$ and $\mathfrak{o}\{<s,0>,<s,1>\}=\{<t,0>,<t,1>\}$;
\\
5. exactly one the following cases occurs:
	\begin{itemize}
	\item[a.] $\mathfrak{o}(\mathsf{P}_i)=\mathsf{P}_i$, $\mathfrak{o}(\mathsf{L}_i)=\mathsf{L}_i$ for $i=1,2,3,4$ and $\mathfrak{o}:\mathbb{O}\to\mathbb{O}$ is order preserving;
	\item[b.] $\mathfrak{o}(\mathsf{P}_1)=\mathsf{P}_3, \mathfrak{o}(\mathsf{P}_2)=\mathsf{P}_4, \mathfrak{o}(\mathsf{P}_3)=\mathsf{P}_1, \mathfrak{o}(\mathsf{P}_4)=\mathsf{P}_2$,
	$\mathfrak{o}(\mathsf{L}_1)=\mathsf{L}_3$, $\mathfrak{o}(\mathsf{L}_2)=\mathsf{L}_4$, $ \mathfrak{o}(\mathsf{L}_3)=\mathsf{L}_1$, $\mathfrak{o}(\mathsf{L}_4)=\mathsf{L}_2$ and $\mathfrak{o}:\mathbb{O}\to\mathbb{O}$ is anti--order preserving.
	\end{itemize}
\end{lemma}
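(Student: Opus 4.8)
The plan is to use that $\mathbb{O}$ is a linear continuum, so that every self--homeomorphism is forced to be monotone; once that is known, the remaining parts become order--theoretic bookkeeping. First I would record that $([0,1]\times[0,1],\preceq_\ell)$ is densely ordered and Dedekind complete, hence a compact connected linearly ordered space with least element $\mathsf{P}_1=<0,0>$ and greatest element $\mathsf{P}_3=<1,1>$. For part (1) I would run the standard cut--point argument: in a connected ordered space a point is a cut point exactly when it is neither the minimum nor the maximum, since deleting an interior point $c$ splits the space into the two nonempty open rays $\{z\prec_\ell c\}$ and $\{c\prec_\ell z\}$, whereas deleting an endpoint leaves a connected ray. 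Since a homeomorphism preserves cut points, $\mathfrak{o}(\{\mathsf{P}_1,\mathsf{P}_3\})=\{\mathsf{P}_1,\mathsf{P}_3\}$, which is (2); and tracking which ray of $\mathbb{O}\setminus\{\mathfrak{o}(c)\}$ contains $\mathfrak{o}(\mathsf{P}_1)$ shows $x\prec_\ell c\iff\mathfrak{o}(x)\prec_\ell\mathfrak{o}(c)$ for every interior $c$, so $\mathfrak{o}$ is order preserving when $\mathfrak{o}(\mathsf{P}_1)=\mathsf{P}_1$ and anti--order preserving when $\mathfrak{o}(\mathsf{P}_1)=\mathsf{P}_3$, giving (1).

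For part (3) I would isolate a purely topological invariant that detects the interiors of the columns. Let $R$ be the set of points having an open neighbourhood that is second countable. A point $<x,y>$ with $0<y<1$ lies in $R$, because $(<x,y-\varepsilon>,<x,y+\varepsilon>)=\{x\}\times(y-\varepsilon,y+\varepsilon)$ carries the Euclidean interval topology. Conversely, if $y\in\{0,1\}$ or $\mathfrak{x}$ is an endpoint, then every basic neighbourhood $(<a,b>,<c,d>)$ of $\mathfrak{x}$ must reach into columns strictly above or strictly below $x$, hence contains an uncountable family of pairwise disjoint nonempty open sets $\{x'\}\times(0,1)$ indexed by an interval of first coordinates; such a neighbourhood cannot be second countable. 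Thus $R=\mathbb{O}\setminus(\mathsf{L}_2\cup\mathsf{L}_4\cup\{\mathsf{P}_1,\mathsf{P}_2,\mathsf{P}_3,\mathsf{P}_4\})$, and since $\mathfrak{o}(R)=R$, the set $\mathsf{L}_2\cup\mathsf{L}_4\cup\{\mathsf{P}_1,\dots,\mathsf{P}_4\}$ is $\mathfrak{o}$--invariant; removing $\{\mathsf{P}_1,\mathsf{P}_3\}$ via (2) yields (3).

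For part (4) I would analyse the connected components of $R$. Each open column $\{s\}\times(0,1)$ is connected, and it is clopen in $R$ because its closure in $\mathbb{O}$ is $\{s\}\times[0,1]$, which meets $R$ in exactly $\{s\}\times(0,1)$; hence the components of $R$ are precisely the open columns. A homeomorphism permutes these components, so $\mathfrak{o}(\{s\}\times(0,1))=\{t\}\times(0,1)$ for some $t$, and taking closures gives $\mathfrak{o}(\{s\}\times[0,1])=\{t\}\times[0,1]$; deleting the interior identifies $\mathfrak{o}\{<s,0>,<s,1>\}$ with $\{<t,0>,<t,1>\}$, which is (4). Finally, for (5) I would split on the dichotomy of (1). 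If $\mathfrak{o}$ is order preserving then $\mathfrak{o}(\mathsf{P}_1)=\mathsf{P}_1$, so the column through $\mathsf{P}_1$ maps to itself, forcing $\mathfrak{o}(\mathsf{L}_1)=\mathsf{L}_1$ and $\mathfrak{o}(\mathsf{P}_2)=\mathsf{P}_2$, and symmetrically at $\mathsf{P}_3$; moreover an order isomorphism carries the smaller endpoint $<x,0>$ of a column to the smaller endpoint of its image column, so tops go to tops and bottoms to bottoms, whence $\mathfrak{o}(\mathsf{L}_2)=\mathsf{L}_2$, $\mathfrak{o}(\mathsf{L}_4)=\mathsf{L}_4$ after discarding the corners via (3); this is case (a). The anti--order preserving case is identical with the roles of top/bottom, of $\mathsf{P}_1/\mathsf{P}_3$, $\mathsf{P}_2/\mathsf{P}_4$, $\mathsf{L}_1/\mathsf{L}_3$, and $\mathsf{L}_2/\mathsf{L}_4$ interchanged, giving case (b); the two cases are mutually exclusive since a bijection of a set with more than one element cannot be both order preserving and order reversing.

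The main obstacle I expect is twofold and lies at the start: verifying that $\mathbb{O}$ is genuinely a linear continuum so that the cut--point argument of (1) applies, and establishing the second--countability dichotomy of (3) (especially that \emph{every} neighbourhood of a top, bottom, or corner point fails to be second countable). Once these two invariants are in hand, parts (2), (4) and (5) are essentially forced.
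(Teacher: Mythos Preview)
Your overall strategy is sound and parallels the paper's: establish monotonicity, isolate $\mathsf{L}_2\cup\mathsf{L}_4\cup\{\mathsf{P}_2,\mathsf{P}_4\}$ via a local topological invariant, then handle columns by connectedness, and finish with order bookkeeping. The paper states (1) without proof and uses ``every neighbourhood is non--metrizable'' for (3); your second--countability variant is equivalent here.

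There is, however, a slip in your computation of $R$: the endpoints $\mathsf{P}_1$ and $\mathsf{P}_3$ \emph{do} have second countable neighbourhoods, namely $[\mathsf{P}_1,<0,\tfrac12>)=\{0\}\times[0,\tfrac12)$ and $(<1,\tfrac12>,\mathsf{P}_3]=\{1\}\times(\tfrac12,1]$. Your interval argument breaks at these two points because basic neighbourhoods of the minimum and maximum are half--open rays, not open intervals $(<a,b>,<c,d>)$. Thus in fact $R=\mathbb{O}\setminus(\mathsf{L}_2\cup\mathsf{L}_4\cup\{\mathsf{P}_2,\mathsf{P}_4\})$, which gives (3) immediately without invoking (2). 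This also perturbs (4): the components of the correct $R$ are $\{0\}\times[0,1)$, the open columns $\{s\}\times(0,1)$ for $0<s<1$, and $\{1\}\times(0,1]$. Taking closures still yields the full columns, so your conclusion survives; alternatively, pass to $R\setminus\{\mathsf{P}_1,\mathsf{P}_3\}$, which is $\mathfrak{o}$--invariant by (2), and then your component description is literally correct.

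For (4) the paper takes a different route: it first places $\mathfrak{o}<s,0>$ and $\mathfrak{o}<s,1>$ on the top/bottom edge via (2) and (3), and then exhibits an explicit order--separation to show the connected image $\mathfrak{o}(\{s\}\times[0,1])$ cannot meet two distinct columns. Your components--of--$R$ argument reaches the same conclusion more conceptually, at the cost of needing the precise description of $R$ to be right.
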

\begin{proof}
{\bf 2.} Use (1) and $\mathsf{P}_1=\max\mathbb{O}$, $\mathsf{P}_3=\min\mathbb{O}$.
\\
{\bf 3.} Use the fact that all open neighbourhoods of $\mathfrak{x}\in\mathbb{O}$ is non--metrizable if and only if 
	$\mathfrak{x}\in\mathsf{L}_2\cup\mathsf{L}_4\cup\{\mathsf{P}_2,\mathsf{P}_4\}$.
\\
{\bf 4.}  Consider $s\in[0,1]$, using (2) and (3) we have 
	$<a,b>:=\mathfrak{o}<s,0>,<c,d>:=\mathfrak{o}<s,1>\in\mathsf{L}_2\cup\mathsf{L}_4\cup\{\mathsf{P}_i:1\leq i\leq4\}$.
	So $b,d\in\{0,1\}$. Choose $x\in[0,1]$ and suppose $<t,v>:=\mathfrak{o}<s,x>$. 
	If $t\neq a$ then we may choose $r\in\{\frac{a+t}{2},\frac{a+2t}{3},\frac{a+3t}{4},\frac{a+4t}{5}\}\setminus\{a,c,t\}$.
	Then $<r,0>\notin\mathfrak{o}(\{s\}\times[0,1])$ and for:
	\begin{eqnarray*}
	U & := & \{<z,w>\in\mathbb{O}:<z,w>\prec_\ell<r,0>\}\cap\mathfrak{o}(\{s\}\times[0,1])\:, \\
	V & := & \{<z,w>\in\mathbb{O}:<r,0>\prec_\ell<z,w>\}\cap\mathfrak{o}(\{s\}\times[0,1])\:,
	\end{eqnarray*}
	$U,V$ is a separation of $\mathfrak{o}(\{s\}\times[0,1])$ 
	which is in contradiction with connectedness of $\mathfrak{o}(\{s\}\times[0,1])$. Thus $t=a$ and 
	$\mathfrak{o}(\{s\}\times[0,1])\subseteq\{t\}\times[0,1]$.
	In particular $a=c=t$, so $<t,b>=\mathfrak{o}<s,0>,<t,d>=\mathfrak{o}<s,1>$ with $b,d\in\{0,1\}$. Thus
	$\mathfrak{o}\restriction_{\{s\}\times[0,1]}:\{s\}\times[0,1]\to\{t\}\times[0,1]$ is a continuous map
	with $<t,0>,<t,1>\in\mathfrak{o}(\{s\}\times[0,1])$ which completes the proof.
\\
{\bf 5.} (a) Suppose $\mathfrak{o}:\mathbb{O}\to\mathbb{O}$ is order preserving. So 
$\mathfrak{o}(\mathsf{P}_1)=\mathfrak{o}(\min\mathbb{O})=
\min\mathbb{O}=\mathsf{P}_1$ and $\mathfrak{o}(\mathsf{P}_3)=\mathfrak{o}(\max\mathbb{O})=
\max\mathbb{O}=\mathsf{P}_3$, also by (3) we have 
\[\mathfrak{o}(\mathsf{P}_2)=\mathfrak{o}(\min(\mathsf{L}_2\cup\mathsf{L}_4\cup\{\mathsf{P}_2,\mathsf{P}_4\}))=\min(\mathsf{L}_2\cup\mathsf{L}_4\cup\{\mathsf{P}_2,\mathsf{P}_4\})=\mathsf{P}_2\]
and 
\[\mathfrak{o}(\mathsf{P}_4)=\mathfrak{o}(\max(\mathsf{L}_2\cup\mathsf{L}_4\cup\{\mathsf{P}_2,\mathsf{P}_4\}))=\max(\mathsf{L}_2\cup\mathsf{L}_4\cup\{\mathsf{P}_2,\mathsf{P}_4\})=\mathsf{P}_4\:.\]
Hence by (4) we have $\mathfrak{o}(\mathsf{L}_1)=\mathsf{L}_1$ and $\mathfrak{o}(\mathsf{L}_4)=\mathsf{L}_4$. Consider
$s\in[0,1]$, by (4) there exists $t\in[0,1]$ with $\mathfrak{o}(\{s\}\times[0,1])=\{t\}\times[0,1]$ so
\[\mathfrak{o}<s,0>=\mathfrak{o}(\min(\{s\}\times[0,1]))=\min\mathfrak{o}(\{s\}\times[0,1])=\min(\{t\}\times[0,1])=<t,0>\:,\]
which shows $\mathfrak{o}(\mathsf{L}_1\cup\{\mathsf{P}_1,\mathsf{P}_2\})\subseteq\mathsf{L}_1\cup\{\mathsf{P}_1,\mathsf{P}_2\}$
and $\mathfrak{o}(\mathsf{L}_4)\subseteq\mathsf{L}_4$; also by a similar method we have $\mathfrak{o}<s,1>=<t,1>$ which leads also to 
$\mathfrak{o}(\mathsf{L}_2)\subseteq\mathsf{L}_2$. Use (2) to obtain $\mathfrak{o}(\mathsf{L}_2)=\mathsf{L}_2$
and $\mathfrak{o}(\mathsf{L}_4)=\mathsf{L}_4$.
\\
(b) Use a similar method described for (a).
\end{proof}
\begin{theorem}\label{salam25}
$\mathfrak{o}:\mathbb{O}\to\mathbb{O}$ is an order preserving homeomorphism if and only if there exist
order preserving homeomorphism $\theta:[0,1]\to[0,1]$  and $\mathop{\mu:[0,1]\to[0,1]^{[0,1]}}\limits_{t\mapsto\mu_t}$
such that for all $t\in[0,1]$, $\mu_t:[0,1]\to[0,1]$ is an order preserving homeomorphism and
$\mathfrak{o}<s,t>=<\theta(s),\mu_s(t)>$.
\\
Also $\mathfrak{o}:\mathbb{O}\to\mathbb{O}$ is an anti--order preserving homeomorphism if and only if there exist
anti--order preserving homeomorphism $\theta:[0,1]\to[0,1]$  and $\mathop{\mu:[0,1]\to[0,1]^{[0,1]}}\limits_{t\mapsto\mu_t}$
such that for all $t\in[0,1]$, $\mu_t:[0,1]\to[0,1]$ is an anti--order preserving homeomorphism and
$\mathfrak{o}<s,t>=<\theta(s),\mu_s(t)>$.
\end{theorem}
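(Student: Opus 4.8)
The plan is to read off the pair $(\theta,\mu)$ from the fibre structure already supplied by Lemma~\ref{salam20}, and to prove the converse by a direct order-theoretic check; I would do the order-preserving case in full and obtain the anti-order-preserving one by reversing inequalities throughout. Assume first that $\mathfrak{o}$ is an order-preserving homeomorphism. By Lemma~\ref{salam20}(4) each fibre $\{s\}\times[0,1]$ is carried onto a fibre $\{\theta(s)\}\times[0,1]$, and since the fibres partition $\mathbb{O}$ and $\mathfrak{o}$ is bijective, $\theta:[0,1]\to[0,1]$ is a well-defined bijection; writing $\mu_s(t)$ for the second coordinate of $\mathfrak{o}<s,t>$ then yields $\mathfrak{o}<s,t>=<\theta(s),\mu_s(t)>$ with each $\mu_s$ a bijection of $[0,1]$. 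For $s_1<s_2$ we have $<s_1,1>\prec_\ell<s_2,0>$, so order preservation gives $<\theta(s_1),\mu_{s_1}(1)>\prec_\ell<\theta(s_2),\mu_{s_2}(0)>$, hence $\theta(s_1)\leq\theta(s_2)$, and injectivity of $\theta$ upgrades this to $\theta(s_1)<\theta(s_2)$; restricting $\prec_\ell$ to a single fibre, where it is just comparison of second coordinates, shows in the same way that each $\mu_s$ is strictly increasing. Finally, since the order topology on $[0,1]$ is its Euclidean topology and an order isomorphism between linearly ordered spaces is automatically a homeomorphism of the associated order topologies, $\theta$ and every $\mu_s$ are order-preserving homeomorphisms.

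For the converse, given an order-preserving homeomorphism $\theta$ and order-preserving homeomorphisms $(\mu_s)_{s\in[0,1]}$, set $\mathfrak{o}<s,t>:=<\theta(s),\mu_s(t)>$; this is a bijection with inverse $<s',t'>\mapsto<\theta^{-1}(s'),\mu^{-1}_{\theta^{-1}(s')}(t')>$. I would verify order preservation by cases on a comparison $<s_1,t_1>\prec_\ell<s_2,t_2>$: when $s_1<s_2$, the inequality $\theta(s_1)<\theta(s_2)$ already orders the images lexicographically regardless of the second coordinates; when $s_1=s_2$ and $t_1\leq t_2$, equality of first coordinates together with $\mu_{s_1}(t_1)\leq\mu_{s_1}(t_2)$ does the same. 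Thus $\mathfrak{o}$ is an order isomorphism of $(\mathbb{O},\preceq_\ell)$ onto itself, and therefore a homeomorphism by the same order-topology principle used above.

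The anti-order-preserving statement runs through the identical scheme with every inequality reversed: from $<s_1,1>\prec_\ell<s_2,0>$ together with $\mathfrak{o}<s_2,0>\prec_\ell\mathfrak{o}<s_1,1>$ one obtains $\theta(s_1)>\theta(s_2)$ for $s_1<s_2$, each $\mu_s$ comes out strictly decreasing, so both are anti-order-preserving homeomorphisms, and the converse is the same casework with reversed conclusions. The only substantive ingredients are Lemma~\ref{salam20}(4), which supplies the column-by-column fibre decomposition, and the standard fact that a monotone (increasing or decreasing) bijection is a homeomorphism for the order topology. I therefore expect the main obstacle to lie not in the verifications above, which are bookkeeping, but in establishing that fibre structure in the first place, and that has already been carried out in Lemma~\ref{salam20}.
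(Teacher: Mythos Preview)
Your argument is correct and follows essentially the same strategy as the paper: extract $\theta$ and the $\mu_s$ from the fibre structure of Lemma~\ref{salam20}(4), check monotonicity coordinatewise, and for the converse observe that an order isomorphism of a linearly ordered set is a homeomorphism for the order topology. The only notable divergence is in the anti--order--preserving case: you rerun the whole argument with reversed inequalities, whereas the paper disposes of it in one line by composing with the involution $\varphi\langle s,t\rangle=\langle 1-s,1-t\rangle$ and invoking the order--preserving case already established.
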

\begin{proof}
First suppose $\mathfrak{o}:\mathbb{O}\to\mathbb{O}$ is an order preserving homeomorphism, by 
\linebreak
Lemma~\ref{salam20}
for each $s\in[0,1]$ there exists $t\in[0,1]$ with $\mathfrak{o}(\{s\}\times[0,1])=\{t\}\times[0,1]$
let $\theta(s):=t$. Also by Lemma~\ref{salam20} (since $\mathfrak{o}\restriction_{\mathsf{L}_2\cup\{\mathsf{P}_2,\mathsf{P}_3\}}:
\mathsf{L}_2\cup\{\mathsf{P}_2,\mathsf{P}_3\}\to\mathsf{L}_2\cup\{\mathsf{P}_2,\mathsf{P}_3\}$ is order preserving and bijection),
$\theta:[0,1]\to[0,1]$ is order preserving and bijection, thus it is an order preserving homeomorphism on $[0,1]$.
Now for $s\in[0,1]$, considering homeomorphism
$\mathfrak{o}\restriction_{\{s\}\times[0,1]}:\{s\}\times[0,1]\to\{\theta(s)\}\times[0,1]$, we may define
homeomorphism $\mu_s:[0,1]\to[0,1]$ with $\mathfrak{o}<s,t>=<\theta(s),\mu_s(t)>$. For $x,y\in[0,1]$ with $x\leq y$
since $<s,x>\preceq_\ell<s,y>$ we have 
\[<\theta(s),\mu_s(x)>=\mathfrak{o}<s,x>\preceq_\ell\mathfrak{o}<s,y>=<\theta(s),\mu_s(y)>\]
which leads to $\mu_s(x)\leq\mu_s(y)$ and  $\mu_s:[0,1]\to[0,1]$ is order preserving too.
\\
Conversely, consider order preserving homeomorphism $\theta:[0,1]\to[0,1]$  and 
\linebreak
$\mathop{\mu:[0,1]\to[0,1]^{[0,1]}}\limits_{t\mapsto\mu_t}$
such that for all $t\in[0,1]$, $\mu_t:[0,1]\to[0,1]$ is an order preserving homeomorphism and define $\mathfrak{o}:\mathbb{O}\to\mathbb{O}$ 
with $\mathfrak{o}<s,t>=<\theta(s),\mu_s(t)>$. It's clear that $\mathfrak{o}:\mathbb{O}\to\mathbb{O}$  is order preserving and bijective
which leads to continuity of $\mathfrak{o}:\mathbb{O}\to\mathbb{O}$ under order topology.
\\
In order to complete the proof consider homeomorphism $\mathop{\varphi:\mathbb{O}\to\mathbb{O}}\limits_{<s,t>\mapsto<1-s,1-t>}$
and note that $\mathfrak{o}:\mathbb{O}\to\mathbb{O}$ is an anti--order preserving homeomorphism if and only if
$\varphi\circ\mathfrak{o}:\mathbb{O}\to\mathbb{O}$ is an order preserving homeomorphism.
\end{proof}
\begin{note}\label{salam27}
If $\mathfrak{a}:\mathbb{A}\to\mathbb{A}$ is a homeomorphism, then exist
homeomorphism $\theta:[0,1]\to[0,1]$ with $\theta(\{0,1\})=\{0,1\}$  and $\mathop{\mu:[0,1]\to[0,1]^{[0,1]}}\limits_{t\mapsto\mu_t}$
such that for all $t\in[0,1]$, $\mu_t:[0,1]\to[0,1]$ is a homeomorphism with $\mu_t(t)=\theta(t)$ and
$\mathfrak{a}<s,t>=<\theta(s),\mu_s(t)>$ (note that $\mathfrak{a}\restriction_{\Delta\cup\{\mathsf{P}_1,\mathsf{P}_3\}}:\Delta\cup\{\mathsf{P}_1,\mathsf{P}_3\}\to\Delta\cup\{\mathsf{P}_1,\mathsf{P}_3\}$ is a homeomorphism).
\end{note}
\begin{corollary}
For homeomorphisms $p,q:[0,1]\to[0,1]$,
consider 
\[\mathop{p\times q:[0,1]\times[0,1]\to[0,1]\times[0,1]}\limits_{<s,t>\mapsto<p(s),q(t)>}\:,\]
then we have:
\begin{itemize}
\item[1.] $p\times q:\mathbb{A}\to\mathbb{A}$ is a homeomorphism if and only if $p=q$;
\item[2.] $p\times q:\mathbb{O}\to\mathbb{O}$ is a homeomorphism if and only if $p\circ q:[0,1]\to[0,1]$ is order preserving;
\item[3.] $p\times q:\mathbb{U}\to\mathbb{U}$ is a homeomorphism.
\end{itemize}
\end{corollary}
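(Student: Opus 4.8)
The plan is to treat the three items separately, leaning on the structure results already proved: Note~\ref{salam27} for $\mathbb{A}$, Theorem~\ref{salam25} together with Lemma~\ref{salam20}(1) for $\mathbb{O}$, and only elementary product--topology facts for $\mathbb{U}$. The guiding observation is that $p\times q$ has the ``separated'' form $<p(s),q(t)>$, which matches exactly the shape $<\theta(s),\mu_s(t)>$ occurring in those results, with $\theta=p$ and $\mu_s\equiv q$ independent of $s$. Item 3 I would dispose of first, since it is immediate: $\mathbb{U}$ carries the product (Euclidean) topology and $p,q$ are homeomorphisms of $[0,1]$, so the product map $p\times q$ is a homeomorphism with no side condition.

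For item 1, the ``if'' direction I would prove directly by checking that whenever $r:[0,1]\to[0,1]$ is a homeomorphism, $r\times r$ carries each of the two basic open sets of $\mathbb{A}$ to a set of the same type: a set $\{x\}\times U$ goes to $\{r(x)\}\times r(U)$, where $r(U)$ is Euclidean--open and $r(x)\notin r(U)$ because $r$ is an injective homeomorphism, and a set $([0,1]\times U)\setminus(\{x_1,\ldots,x_n\}\times[0,1])$ goes to $([0,1]\times r(U))\setminus(\{r(x_1),\ldots,r(x_n)\}\times[0,1])$ since $r\times r$ is a bijection and commutes with set differences. Hence $r\times r$ is open; applying this to $r=p$ and to $r=p^{-1}$ (so that $(p\times p)^{-1}=p^{-1}\times p^{-1}$ is open too, i.e. $p\times p$ is continuous) shows $p\times p$ is a homeomorphism. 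For the ``only if'' direction I would feed $\mathfrak{a}=p\times q$ into Note~\ref{salam27}: matching first coordinates forces $\theta=p$, matching second coordinates forces $\mu_s=q$ for every $s$, and then the constraint $\mu_t(t)=\theta(t)$ reads $q(t)=p(t)$ for all $t$, i.e. $p=q$.

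For item 2 I would first record that $p,q$, being self--homeomorphisms of $[0,1]$, are each monotone, so $p\circ q$ is order preserving exactly when $p$ and $q$ have the same monotonicity type (both order preserving, or both anti--order preserving). By Lemma~\ref{salam20}(1) every homeomorphism of $\mathbb{O}$ is order preserving or anti--order preserving, so it suffices to read off from Theorem~\ref{salam25}, in the separated form $\theta=p$, $\mu_s\equiv q$, that $p\times q$ is an order preserving homeomorphism iff both $p$ and $q$ are order preserving, and an anti--order preserving homeomorphism iff both are anti--order preserving (the decomposition being forced, these equivalences are exact). Combining the two equivalences with Lemma~\ref{salam20}(1) shows that $p\times q$ is a homeomorphism of $\mathbb{O}$ precisely when $p$ and $q$ share their type, i.e. precisely when $p\circ q$ is order preserving.

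I expect the only genuinely delicate point to be the ``if'' direction of item 1: because $\mathbb{A}$ is not a product space, one cannot invoke a product--of--homeomorphisms principle and must instead verify by hand that $p\times p$ respects the non--standard basis, handling both basis types and, crucially, the non--membership condition $x\notin U$. The remaining implications are clean consequences of Note~\ref{salam27}, Theorem~\ref{salam25}, and the monotonicity dichotomy of Lemma~\ref{salam20}(1).
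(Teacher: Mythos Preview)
Your proposal is correct and largely parallels the paper's argument. For item~2 you and the paper do the same thing: invoke Lemma~\ref{salam20}(1) to split into the order preserving/anti--order preserving cases, read off that $p,q$ must then share monotonicity type, and use Theorem~\ref{salam25} for the converse. Item~3 is trivial in both.

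The one noteworthy difference is in item~1. For the ``only if'' direction the paper goes straight to Lemma~\ref{salam10}(1): since any self--homeomorphism of $\mathbb{A}$ preserves $\Delta$, the point $<p(t),q(t)>=(p\times q)<t,t>$ lies in $\Delta$, forcing $p(t)=q(t)$. You instead route through Note~\ref{salam27} and extract $p=q$ from the constraint $\mu_t(t)=\theta(t)$; this is the same diagonal--preservation idea one level of packaging higher, so not a genuinely different method. On the other hand, you supply the ``if'' direction of item~1 (checking that $p\times p$ carries each Alexandroff basic set to one of the same type), which the paper's proof omits entirely; your basis check is correct and fills a real gap in the paper's presentation.
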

\begin{proof}
{\bf 1.} If $p\times q:\mathbb{A}\to\mathbb{A}$ is a homeomorphism, then by Lemma~\ref{salam10}
we have $p\times q(\Delta)=\Delta$, thus for all $t\in[0,1]$ we have $<p(t),q(t)>=p\times q(t,t)\in\Delta$
which shows $p(t)=q(t)$ and leads to $p=q$.
\\
{\bf 2.} Suppose $p\times q:\mathbb{O}\to\mathbb{O}$ is a homeomorphism, by Lemma~\ref{salam20} 
one of the following cases accours:
\begin{itemize}
\item $p\times q:\mathbb{O}\to\mathbb{O}$ is order preserving: in this case $p,q:[0,1]\to[0,1]$ are 
	order preserving too, thus $p\circ q:[0,1]\to[0,1]$ is order preserving;
\item $p\times q:\mathbb{O}\to\mathbb{O}$ is anti--order preserving: in this case $p,q:[0,1]\to[0,1]$ are 
	anti--order preserving too, thus $p\circ q:[0,1]\to[0,1]$ is order preserving.
\end{itemize}
Using the above cases $p\circ q:[0,1]\to[0,1]$ is order preserving.
\\
Conversely suppose $p\circ q:[0,1]\to[0,1]$ is order preserving, thus either 
 ``$p,q:[0,1]\to[0,1]$ are order preserving'' or  ``$p,q:[0,1]\to[0,1]$ are anti--order preserving''.
Use Theorem~\ref{salam25} to complete the proof of this item.
\end{proof}
\begin{theorem}\label{salam30}
We have:
\begin{eqnarray*}
\frac{\mathbb{A}}{\mathcal{G}_{\mathbb{A}}} & = & \{\{\mathsf{P}_1,\mathsf{P}_3\},\{\mathsf{P}_2,\mathsf{P}_4\},\mathsf{L}_1\cup\mathsf{L}_3,\mathsf{L}_2\cup\mathsf{L}_4,\Delta\setminus\{\mathsf{P}_1,\mathsf{P}_3\},((0,1)\times(0,1))\setminus\Delta\}\:, \\
\frac{\mathbb{O}}{\mathcal{G}_{\mathbb{O}}} & = & \{\{\mathsf{P}_1,\mathsf{P}_3\},\{\mathsf{P}_2,\mathsf{P}_4\},\mathsf{L}_1\cup\mathsf{L}_3,\mathsf{L}_2\cup\mathsf{L}_4,(0,1)\times(0,1)\}\:, \\
\frac{\mathbb{U}}{\mathcal{G}_{\mathbb{U}}} & = & \{(0,1)\times(0,1),\mathbb{U}\setminus((0,1)\times(0,1))\}\:.
\end{eqnarray*}
\end{theorem}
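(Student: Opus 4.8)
The plan is to compute each orbit space by using the structural constraints on homeomorphisms established in Lemmas~\ref{salam10} and~\ref{salam20} (and their refinements in Theorem~\ref{salam25} and Note~\ref{salam27}) to show, for each candidate set $S$ in the claimed partition, two things: that $S$ is $\mathcal{G}_X$-invariant (so each orbit lies inside some $S$), and that $\mathcal{G}_X$ acts transitively on $S$ (so $S$ is a single orbit). Since the listed sets visibly partition $[0,1]\times[0,1]$, establishing these two facts for every $S$ simultaneously proves that $\frac{X}{\mathcal{G}_X}$ is exactly the claimed collection. First I would record the invariance half for $\mathbb{A}$: Lemma~\ref{salam10}(1) gives $\mathfrak{a}(\Delta)=\Delta$ and $\mathfrak{a}(\{\mathsf{P}_1,\mathsf{P}_3\})=\{\mathsf{P}_1,\mathsf{P}_3\}$, hence also invariance of $\Delta\setminus\{\mathsf{P}_1,\mathsf{P}_3\}$; parts (2) and (4) give invariance of $\mathsf{L}_2\cup\mathsf{L}_4\cup\{\mathsf{P}_2,\mathsf{P}_4\}$, of $\{\mathsf{P}_2,\mathsf{P}_4\}$ and of $\mathsf{L}_1\cup\mathsf{L}_3$; the remaining piece $((0,1)\times(0,1))\setminus\Delta$ is then forced to be invariant as the complement of the union of the others. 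The analogous invariance statements for $\mathbb{O}$ come directly from Lemma~\ref{salam20}(2)--(5), with the extra information that there $\mathsf{L}_2\cup\mathsf{L}_4$ and $\mathsf{L}_1\cup\mathsf{L}_3$ are each invariant as unions of two lines that can only be permuted among themselves.

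For the transitivity half, the main work is exhibiting, for any two points of a given $S$, an explicit homeomorphism carrying one to the other. The cleanest route is to build these maps in the split form $\mathfrak{a}\langle s,t\rangle=\langle\theta(s),\mu_s(t)\rangle$ furnished by Theorem~\ref{salam25} and Note~\ref{salam27}: choosing an order-preserving homeomorphism $\theta$ of $[0,1]$ with prescribed value on the first coordinate, and a family $\mu_s$ of order-preserving homeomorphisms of $[0,1]$ with prescribed behaviour on the second coordinate, lets me move any interior point to any other while respecting all the constraints. For the diagonal orbit $\Delta\setminus\{\mathsf{P}_1,\mathsf{P}_3\}$ in $\mathbb{A}$ I must keep $\mu_s(s)=\theta(s)$ (the condition in Note~\ref{salam27}); for the four ``edge'' orbits I use the freedom in $\mu_s$ to send $t=0$ or $t=1$ wherever needed while landing in the correct line. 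To realize the two-point orbits $\{\mathsf{P}_1,\mathsf{P}_3\}$ and $\{\mathsf{P}_2,\mathsf{P}_4\}$ as single orbits rather than two, I invoke case (b) of Lemma~\ref{salam10}(4) (resp. Lemma~\ref{salam20}(5)), realized concretely by the flip $\varphi\langle s,t\rangle=\langle 1-s,1-t\rangle$, which swaps $\mathsf{P}_1\leftrightarrow\mathsf{P}_3$ and $\mathsf{P}_2\leftrightarrow\mathsf{P}_4$ and is a homeomorphism of both $\mathbb{A}$ and $\mathbb{O}$.

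The hard part will be the off-diagonal interior orbit of $\mathbb{A}$, namely showing that $\mathcal{G}_{\mathbb{A}}$ acts transitively on $((0,1)\times(0,1))\setminus\Delta$. Given $\langle s,t\rangle$ and $\langle s',t'\rangle$ in this set (both with $s\neq t$, $s'\neq t'$ and all coordinates interior), I need a homeomorphism of the prescribed form sending one to the other while simultaneously fixing $\Delta$ setwise with $\mu_s(s)=\theta(s)$. I would first choose $\theta$ with $\theta(s)=s'$, which pins down $\mu_s(s)=s'$; then I must select an order-preserving homeomorphism $\mu_s$ of $[0,1]$ satisfying both $\mu_s(s)=s'$ and $\mu_s(t)=t'$, which is possible precisely because the strict inequalities $s\neq t$ and $s'\neq t'$ have the same sign can be arranged by the flip if needed, so that the two interpolation conditions are order-compatible. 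The subtlety is that $\theta$ simultaneously controls the diagonal value $\mu_s(s)$ for every $s$, so I cannot choose each $\mu_s$ entirely freely; I would address this by fixing $\theta$ first and then, for the single relevant fiber $s$, choosing $\mu_s$ to hit $t'$, letting the remaining fibers be chosen arbitrarily subject only to the diagonal constraint. For $\mathbb{O}$ the corresponding interior orbit is the full open square $(0,1)\times(0,1)$, which is easier because there is no diagonal constraint to respect and Theorem~\ref{salam25} already describes all order-preserving homeomorphisms in exactly the separable form needed. Finally, the $\mathbb{U}$ computation is routine: the group of homeomorphisms of the Euclidean square acts transitively on the open interior and on the topological boundary (these being characterized by local Euclidean structure), giving the two-element orbit space directly.
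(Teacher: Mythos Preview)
Your overall strategy---invariance from Lemmas~\ref{salam10} and~\ref{salam20}, then transitivity via explicit split-form homeomorphisms, with the flip $\varphi\langle s,t\rangle=\langle 1-s,1-t\rangle$ merging the two halves of each two-point orbit---is exactly the paper's approach. There is, however, a genuine gap in your treatment of $\mathbb{A}$: Note~\ref{salam27} only asserts that every homeomorphism of $\mathbb{A}$ \emph{has} the split form $\langle s,t\rangle\mapsto\langle\theta(s),\mu_s(t)\rangle$; it does not claim, and it is not true, that every such $(\theta,(\mu_s)_s)$ with $\mu_s(s)=\theta(s)$ yields a homeomorphism. Your phrase ``letting the remaining fibers be chosen arbitrarily subject only to the diagonal constraint'' is precisely where this fails: continuity at a diagonal point $\langle s_0,s_0\rangle$ requires that for each basic neighbourhood $([0,1]\times V)\setminus(F\times[0,1])$ of the image, the sets $\mu_s^{-1}(V)$ contain a common open neighbourhood of $s_0$ for all but finitely many $s$, which an arbitrary family $(\mu_s)$ will not satisfy.

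The paper never invokes a converse to Note~\ref{salam27}; instead it builds every needed homeomorphism of $\mathbb{A}$ as a composition of two easily-verified types: the diagonal product maps $\langle s,t\rangle\mapsto\langle f(s),f(t)\rangle$ for a single self-homeomorphism $f$ of $[0,1]$ (used in items A4, A5, A6), and maps that are the identity off a single vertical fiber $\{c\}\times[0,1]$ on which they act by a homeomorphism of $[0,1]$ fixing $c$ (used in items A3 and A6). Both types are directly checked to be continuous in the Alexandroff topology. Your construction for the off-diagonal interior orbit becomes exactly this once you take $\mu_u=\theta$ for every $u\neq s$ and adjust only the one fiber $\{s\}\times[0,1]$; with that correction (and the flip to handle the case where $s-t$ and $s'-t'$ have opposite sign, which the paper does via an order-reversing single-fiber map in its case~III), your argument goes through. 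For $\mathbb{O}$ no such issue arises, since Theorem~\ref{salam25} genuinely proves both directions, and your $\mathbb{U}$ sketch is fine.
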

\begin{proof} We prove case by case. 
Note that $\varphi: X\to X$ with $\varphi<s,t>=<1-s,1-t>$ (for $(s,t)\in X$)
for $X=\mathbb{A},\mathbb{O},\mathbb{U}$ is homeomorphism. Also for $x,y\in(0,1)$ consider hommeoorphism
$f_{x,y}:[0,1]\to[0,1]$ with:
\[f_{x,y}(t)=\left\{\begin{array}{lc}  \dfrac{y}{x}t & 0\leq t\leq x \:, \\ & \\ \dfrac{(1-y)t+(y-x)}{1-x} & x\leq t\leq1\:.\end{array}\right.\]
Now we have:
\begin{itemize}
\item[A1.] $\{\mathsf{P}_1,\mathsf{P}_3\}\in\frac{\mathbb{A}}{\mathcal{G}_{\mathbb{A}}}$:  
	Use Lemma~\ref{salam10} and note that $\varphi(\mathsf{P}_1)=\mathsf{P}_3$.
\item[A2.] $\{\mathsf{P}_2,\mathsf{P}_4\}\in\frac{\mathbb{A}}{\mathcal{G}_{\mathbb{A}}}$: 
	Use Lemma~\ref{salam10} and note that $\varphi(\mathsf{P}_2)=\mathsf{P}_4$.
\item[A3.] $\mathsf{L}_1\cup\mathsf{L}_3\in\frac{\mathbb{A}}{\mathcal{G}_{\mathbb{A}}}$: Using  Lemma~\ref{salam10} 
	we have $\mathcal{G}_\mathbb{A}<0,\frac12>\subseteq\mathcal{G}_\mathbb{A}\mathsf{L}_1\subseteq\mathsf{L}_1\cup\mathsf{L}_3$.
	For $x\in(0,1)$ consider homeomorphism $h:\mathbb{A}\to\mathbb{A}$ with $h<0,t>=<0,f_{\frac{1}{2},x}(t)>$ and
	$h<s,t>=<s,t>$ for $s\neq0$. Then $<0,x>=h<0,\frac{1}{2}>\in \mathcal{G}_\mathbb{A}<0,\frac12>$, thus
	$\mathsf{L}_1\subseteq \mathcal{G}_\mathbb{A}<0,\frac12>$, so
	$\mathsf{L}_1\cup\mathsf{L}_3=\varphi(\mathsf{L}_1)\cup\mathsf{L}_1\subseteq\mathcal{G}_\mathbb{A}<0,\frac12>$ which leads to
	$\mathsf{L}_1\cup\mathsf{L}_3=\mathcal{G}_\mathbb{A}<0,\frac12>\in\frac{\mathbb{A}}{\mathcal{G}_{\mathbb{A}}}$.
\item[A4.] $\mathsf{L}_2\cup\mathsf{L}_4\in\frac{\mathbb{A}}{\mathcal{G}_{\mathbb{A}}}$: Using  Lemma~\ref{salam10}  we have
	$\mathcal{G}_\mathbb{A}<\frac12,0>\subseteq\mathcal{G}_\mathbb{A}\mathsf{L}_4\subseteq\mathsf{L}_2\cup\mathsf{L}_4$.
	For $x\in(0,1)$ consider homeomorphism $h:\mathbb{A}\to\mathbb{A}$ with $h<s,t>=<f_{\frac{1}{2},x}(s),f_{\frac{1}{2},x}(t)>$,
	thus $<x,0>=h<\frac12,0>\in \mathcal{G}_\mathbb{A}<\frac12,0>$ which leads to $\mathsf{L}_4\subseteq\mathcal{G}_\mathbb{A}<\frac12,0>$.
	Thus $\mathsf{L}_2=\varphi(\mathsf{L}_4)\subseteq\varphi(\mathcal{G}_\mathbb{A}<\frac12,0>)=\mathcal{G}_\mathbb{A}<\frac12,0>$
	which leads to $\mathsf{L}_2\cup\mathsf{L}_4=\mathcal{G}_\mathbb{A}<\frac12,0>\in\frac{\mathbb{A}}{\mathcal{G}_{\mathbb{A}}}$.
\item[A5.] $\Delta\setminus\{\mathsf{P}_1,\mathsf{P}_3\}\in\frac{\mathbb{A}}{\mathcal{G}_{\mathbb{A}}}$: Using  Lemma~\ref{salam10} 
	we have $\mathcal{G}_\mathbb{A}<\frac12,\frac{1}{2}>\subseteq\Delta\setminus\{\mathsf{P}_1,\mathsf{P}_3\}$.
	For $x\in(0,1)$ consider homeomorphism $h:\mathbb{A}\to\mathbb{A}$ with $h<s,t>=<f_{\frac{1}{2},x}(s),f_{\frac{1}{2},x}(t)>$
	so $<x,x>=h<\frac12,\frac{1}{2}>\in \mathcal{G}_\mathbb{A}<\frac12,\frac{1}{2}>$ and shows 
	$\Delta\setminus\{\mathsf{P}_1,\mathsf{P}_3\}\subseteq\mathcal{G}_\mathbb{A}<\frac12,\frac{1}{2}>$.
\item[A6.] $((0,1)\times(0,1))\setminus\Delta\in\frac{\mathbb{A}}{\mathcal{G}_{\mathbb{A}}}$: Consider 
	$<a,b>,<c,d>\in ((0,1)\times(0,1))\setminus\Delta$, using (A1), ..., (A5) we have 
	$\mathcal{G}_\mathbb{A}<a,b>\subseteq((0,1)\times(0,1))\setminus\Delta$.
	Consider the following cases:
	\\
	\underline{I.} $b<a$, $d<c$ and $a\leq c$. In this consider homeomorphism 
	$h:\mathbb{A}\to\mathbb{A}$ with $h<s,t>=<f_{a,c}(s),f_{a,c}(t)>$, 
	thus $h<a,b>=<c,f_{a,c}(b)>$ (note
	that $b<a$ thus $f_{a,c}(b)<f_{a,c}(a)=c$). Define $p:\mathbb{A}\to\mathbb{A}$ with:
	{\small
	\[p<s,t>:=\left\{\begin{array}{lc} <s,\dfrac{d}{f_{a,c}(b)}t> & s=c, 0\leq t\leq f_{a,c}(b)\:, \\ & \\ 
	<s,\dfrac{(d-c)t+(f_{a,c}(b)-d)c}{f_{a,c}(b)-c}>& s=c, f_{a,c}(b)\leq t\leq c\:, \\ & \\ <s,t> & {\rm otherwise}\:, \end{array}\right.\]}
	\noindent then $h,p\in\mathcal{G}_\mathbb{A}$ and 
	\[<c,d>=p<c,f_{a,c}(b)>=p(h<a,b>)\in\mathcal{G}_\mathbb{A}<a,b>\:.\]
	\\
	\underline{II.} $b<a$, $d<c$ and $c\leq a$. By case (I) we have $<a,b>\in\mathcal{G}_\mathbb{A}<c,d>$ thus
	there exists $j\in\mathcal{G}_\mathbb{A}$ with $<a,b>=j<c,d>$ so $<c,d>=j^{-1}<a,b>\in\mathcal{G}_\mathbb{A}<a,b>$.
	\\
	\underline{III.} $b<a$ and $d>c$. Choose $e\in(0,c)$ by cases (I) and (II) we have $<c,e>\in\mathcal{G}_\mathbb{A}<a,b>$.
	Define $q:\mathbb{A}\to\mathbb{A}$ with:
	\[q<s,t>:=\left\{\begin{array}{lc} <c,\dfrac{d-1}{e}t+1> & 0\leq t\leq e, s=c\: , \\ & \\ 
	<c,\dfrac{(d-c)t+(e-d)c}{e-c}> & e\leq t\leq c, s=c\: , \\ & \\ 
	<c,\dfrac{c(1-t)}{1-c}> & c\leq t\leq 1, s=c\: , \\ & \\  
	<s,t> & t\neq d \:, \end{array}\right.\]
	then $q\in\mathcal{G}_\mathbb{A}$ and $<c,d>=q<c,e>\in q\mathcal{G}_\mathbb{A}<a,b>=\mathcal{G}_\mathbb{A}<a,b>$.
	\\
	Using cases (I,II, III) we have $((0,1)\times(0,1))\setminus\Delta\subseteq\mathcal{G}_\mathbb{A}<a,b>$ which leads to
	$((0,1)\times(0,1))\setminus\Delta=\mathcal{G}_\mathbb{A}<a,b>\in\frac{\mathbb{A}}{\mathcal{G}_{\mathbb{A}}}$.
\item[O1.] $\{\mathsf{P}_1,\mathsf{P}_3\}\in\frac{\mathbb{O}}{\mathcal{G}_{\mathbb{O}}}$:  
		Use Lemma~\ref{salam20} and note that $\varphi(\mathsf{P}_1)=\mathsf{P}_3$.
\item[O2.] $\{\mathsf{P}_2,\mathsf{P}_4\}\in\frac{\mathbb{O}}{\mathcal{G}_{\mathbb{O}}}$: 
		Use Lemma~\ref{salam20} and note that $\varphi(\mathsf{P}_2)=\mathsf{P}_4$.
\item[O3.] $\mathsf{L}_2\cup\mathsf{L}_4\in\frac{\mathbb{O}}{\mathcal{G}_{\mathbb{O}}}$: 
	By Lemma~\ref{salam20}, 
	$\mathcal{G}_{\mathbb{O}}<\frac{1}{2},0>\subseteq \mathsf{L}_2\cup\mathsf{L}_4$. For $x\in(0,1)$
	consider $h:\mathbb{O}\to\mathbb{O}$ with $h<s,t>=<f_{\frac{1}{2},x}(s),t>$ so
	$<x,0>=h<\frac{1}{2},0>\in\mathcal{G}_{\mathbb{O}}<\frac{1}{2},0>$ and 
	$<x,1>=h(\varphi<\frac{1}{2},0>)\in\mathcal{G}_{\mathbb{O}}<\frac{1}{2},0>$, thus
	$\mathsf{L}_2\cup\mathsf{L}_4\subseteq \mathcal{G}_{\mathbb{O}}<\frac{1}{2},0>$
	which leads to $\mathsf{L}_2\cup\mathsf{L}_4=\mathcal{G}_{\mathbb{O}}<\frac{1}{2},0>\in\frac{\mathbb{O}}{\mathcal{G}_{\mathbb{O}}}$.
\item[O4.] $\mathsf{L}_1\cup\mathsf{L}_3\in\frac{\mathbb{O}}{{\mathcal{G}_{\mathbb O}}}$: 
	By Lemma~\ref{salam20}, 
	$\mathcal{G}_{\mathbb{O}}<0,\frac{1}{2}>\subseteq \mathsf{L}_1\cup\mathsf{L}_3$. For $x\in(0,1)$
	consider $h:\mathbb{O}\to\mathbb{O}$ with $h<s,t>=<s,f_{\frac{1}{2},x}(t)>$ so
	$<0,x>=h<0,\frac{1}{2}>\in\mathcal{G}_{\mathbb{O}}<0,\frac{1}{2}>$ and 
	$<1,x>=h(\varphi<0,\frac{1}{2}>)\in\mathcal{G}_{\mathbb{O}}<0,\frac{1}{2}>$, thus
	$\mathsf{L}_2\cup\mathsf{L}_4\subseteq \mathcal{G}_{\mathbb{O}}<0,\frac{1}{2}>$
	which leads to $\mathsf{L}_2\cup\mathsf{L}_4=\mathcal{G}_{\mathbb{O}}<0,\frac{1}{2}>\in\frac{\mathbb{O}}{\mathcal{G}_{\mathbb{O}}}$.
\item[O5.] $(0,1)\times(0,1)\in\frac{\mathbb{O}}{{\mathcal{G}_{\mathbb O}}}$: Using (O1), (O2), (O3) and (O4) we have
	$\mathcal{G}_\mathbb{O}<\frac{1}{2},\frac{1}{2}>\subseteq(0,1)\times(0,1)$. Choose $<x,y>\in(0,1)\times(0,1)$ and
	define $h:\mathbb{O}\to\mathbb{O}$ with $h<s,t>=<f_{\frac{1}{2},x}(s),f_{\frac{1}{2},y}(t)>$, then
	$<x,y>=h<\frac{1}{2},\frac{1}{2}>$ which shows $(0,1)\times(0,1)\subseteq\mathcal{G}_\mathbb{O}<\frac{1}{2},\frac{1}{2}>$
	and completes the proof.
\end{itemize}
\end{proof}
\subsection*{Devaney chaos} We say transformation group $(G,X)$ is topological transitive if for all opene subset $U,V$ of $X$ we have
$U\cap GV\neq\varnothing$. In transformation group $(G,X)$ we say $x\in X$ is a periodic point if $st(x):=\{g\in G:gx=x\}$
is a finite index subgroup of $G$. Transformation group $(G,X)$ is Devaney chaotic if it is topological transitive and the collection of
its periodic points is dense in $X$ \cite{sen}. Moreover in $(G,X)$ we say $x\in X$ is an almost periodic point if
$\overline{Gx}$ is a minimal subset of $X$ (i.e., it is closed invariant subset of $(G,X)$
without any proper subset which is closed invariant subset of $(G,X)$) \cite{ellis}. All periodic points of $(G,X)$
are almost periodic. Using the following theorem, transformation groups $(\mathcal{G}_\mathbb{A},\mathbb{A})$,
$(\mathcal{G}_\mathbb{O},\mathbb{O})$ and $(\mathcal{G}_\mathbb{U},\mathbb{U})$ are not Devaney chaotic.
\begin{lemma}\label{salam60}
The transformation groups $(\mathcal{G}_\mathbb{A},\mathbb{A})$ and
$(\mathcal{G}_\mathbb{O},\mathbb{O})$ are not topological transitive, however
$(\mathcal{G}_\mathbb{U},\mathbb{U})$ is topological transitive.
\end{lemma}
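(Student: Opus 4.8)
The plan is to reduce topological transitivity to a statement about $G$-saturations of open sets, and then read everything off from the orbit spaces computed in Theorem~\ref{salam30}. First I would record the elementary symmetrization of the definition: since $G$ acts by homeomorphisms, for nonempty open $U,V$ one has $U\cap GV\neq\varnothing$ if and only if $GU\cap V\neq\varnothing$ (if $u=gv$ then $v=g^{-1}u\in GU$, and conversely). Since $GU=\bigcup_{x\in U}Gx$ is an open invariant set, it follows that $(G,X)$ fails to be topological transitive precisely when some nonempty open set has a non-dense $G$-saturation: indeed, if $\overline{GU}\neq X$ then $V:=X\setminus\overline{GU}$ is nonempty open with $GU\cap V=\varnothing$, and conversely $GU\cap V=\varnothing$ forces $\overline{GU}\subseteq X\setminus V\neq X$. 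So the whole lemma reduces to deciding, for each of $\mathbb A,\mathbb O,\mathbb U$, whether some open set has non-dense saturation.

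For $\mathbb A$ I would take $B:=((0,1)\times(0,1))\setminus\Delta$, which by Theorem~\ref{salam30} is a single $\mathcal G_{\mathbb A}$-orbit, hence invariant; so for any nonempty open $U\subseteq B$ we get $GU=B$. It then remains to check that such a $U$ exists and that $B$ avoids a nonempty open set. The set $\mathsf L_1=\{0\}\times(0,1)$ is a basic open set of $\mathbb A$ of the form $\{x\}\times W$ with $x\notin W$ (here $x=0\notin(0,1)$), and it is disjoint from $B$ because every point of $B$ has first coordinate in $(0,1)$. Taking $U=\{\tfrac12\}\times(0,\tfrac14)\subseteq B$ (again a basic open set, since $\tfrac12\notin(0,\tfrac14)$) and $V=\mathsf L_1$ gives $GU\cap V=B\cap\mathsf L_1=\varnothing$, so $(\mathcal G_{\mathbb A},\mathbb A)$ is not topological transitive. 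For $\mathbb O$ the argument has the same shape: by Theorem~\ref{salam30} the set $(0,1)\times(0,1)$ is a single orbit, hence open and invariant, while $\mathsf L_1=\{0\}\times(0,1)$ is open in the order topology (a short computation shows the lexicographic interval $(\langle0,a\rangle,\langle0,b\rangle)$ equals $\{0\}\times(a,b)$) and is disjoint from it; the analogous choice of $U,V$ defeats transitivity.

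For $\mathbb U$ the situation reverses, and I would instead exploit that the interior orbit is dense. By Theorem~\ref{salam30}, $(0,1)\times(0,1)$ is a single $\mathcal G_{\mathbb U}$-orbit, and in the Euclidean topology it is dense in $\mathbb U$. Hence for any nonempty open $U$ there is a point $x\in U\cap((0,1)\times(0,1))$ with $Gx=(0,1)\times(0,1)$, so $GU\supseteq(0,1)\times(0,1)$ is dense; then for any nonempty open $V$ we get $GU\cap V\supseteq((0,1)\times(0,1))\cap V\neq\varnothing$, i.e. $U\cap GV\neq\varnothing$. Thus $(\mathcal G_{\mathbb U},\mathbb U)$ is topological transitive.

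The only genuinely delicate points are the topological verifications in the two nonstandard spaces, and these are the steps I would treat most carefully: that the chosen $U$ is actually open, and — the real crux — that the big orbit fails to be dense because its closure misses the vertical edge $\mathsf L_1$. In $\mathbb A$ this rests on the fact that $\mathsf L_1$ admits the small neighbourhoods $\{0\}\times W$ that stay on the line $\{0\}\times[0,1]$ and never spill into $(0,1)\times(0,1)$, which is exactly the feature separating $\mathbb A$ and $\mathbb O$ from the Euclidean square, where every neighbourhood of a boundary point meets the interior and forces the interior orbit to be dense.
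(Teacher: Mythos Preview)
Your argument is correct and follows essentially the same route as the paper: use Theorem~\ref{salam30} to exhibit disjoint nonempty open sets whose $\mathcal G_X$-saturations remain disjoint (the paper simply takes $U=(0,1)\times(0,1)$ and $V=\mathsf L_1\cup\mathsf L_3$, whereas you take a smaller basic open $U$ inside the big orbit and $V=\mathsf L_1$). You also supply the positive argument for $(\mathcal G_{\mathbb U},\mathbb U)$, which the paper's proof omits. One minor wording slip: in the $\mathbb O$ paragraph you say $(0,1)\times(0,1)$ is ``a single orbit, hence open and invariant''---being an orbit yields invariance but not openness; openness here holds for a separate reason (it is a union of lexicographic intervals $\{s\}\times(a,b)$), and in any case your argument does not actually use it since you pass to the analogous smaller open $U$.
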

\begin{proof}
For $X=\mathbb{A},\mathbb{O}$ the sets $U:=(0,1)\times(0,1)$ and
$V:=\mathsf{L}_1\cup\mathsf{L}_3$ are open subsets of $X$
and by Theorem~\ref{salam30} we have $\mathcal{G}_XU\cap V=U\cap V=\varnothing$ thus $(\mathcal{G}_X,X)$ is not topological transitive.
\end{proof}
\begin{theorem}
For $X=\mathbb{A},\mathbb{O}$ transformation group
$(G,X)$ is not topological transitive, in particular it is not
Devaney chaotic.
\end{theorem}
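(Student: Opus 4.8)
The plan is to reduce directly to Lemma~\ref{salam60}, exploiting the elementary fact that passing from $\mathcal{G}_X$ to a subgroup can only shrink orbits. First I would reuse the two opene sets appearing there: set $U:=(0,1)\times(0,1)$ and $V:=\mathsf{L}_1\cup\mathsf{L}_3$, which are open in $X$ and satisfy $U\cap V=\varnothing$ for both $X=\mathbb{A}$ and $X=\mathbb{O}$.

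The key point is that $U$ is $\mathcal{G}_X$-invariant, and this is already encoded in Theorem~\ref{salam30}: for $X=\mathbb{O}$ the set $(0,1)\times(0,1)$ is itself a single orbit, whereas for $X=\mathbb{A}$ it decomposes as $(\Delta\setminus\{\mathsf{P}_1,\mathsf{P}_3\})\cup(((0,1)\times(0,1))\setminus\Delta)$, a union of two orbits. In either case $U$ is a union of $\mathcal{G}_X$-orbits, so $\mathcal{G}_XU=U$.

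Now for an arbitrary subgroup $G$ of $\mathcal{G}_X$ I would simply note the monotonicity $GU\subseteq\mathcal{G}_XU=U$, so that $U$ is $G$-invariant as well. Hence $GU\cap V\subseteq U\cap V=\varnothing$, and applying the definition of topological transitivity to the ordered pair $(V,U)$ gives $V\cap GU=\varnothing$. This shows $(G,X)$ is not topological transitive, and since Devaney chaos demands topological transitivity, $(G,X)$ is not Devaney chaotic either.

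I do not anticipate any real obstacle: the whole argument is the inclusion $GU\subseteq\mathcal{G}_XU$ combined with the invariance of $U$ recorded in Theorem~\ref{salam30}. The only point requiring a little care is to keep the quantifier order in the definition of topological transitivity straight, so that the disjointness is invoked for the correct ordered pair of opene sets.
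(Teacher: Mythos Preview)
Your proposal is correct and follows essentially the same approach as the paper: both arguments take the opene sets $U,V$ from Lemma~\ref{salam60} (with $\mathcal{G}_XU\cap V=\varnothing$) and then use the inclusion $GU\subseteq\mathcal{G}_XU$ to conclude $GU\cap V=\varnothing$. You are simply more explicit in unpacking why $\mathcal{G}_XU=U$ via Theorem~\ref{salam30} and in tracking the quantifier order in the definition, but the underlying idea is identical.
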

\begin{proof}
By Lemma~\ref{salam60} there are opene subsets $U,V$ of $X$  with
$\mathcal{G}_XU\cap V=\varnothing$. Using $GU\subseteq\mathcal{G}_XU$
we have $GU\cap V=\varnothing$ which completes the proof.
\end{proof}
\begin{note}
In transformation group $(\mathcal{G}_\mathbb{A},\mathbb{A})$ (resp. $(\mathcal{G}_\mathbb{O},\mathbb{O})$),
$\mathfrak{x}$ is an almost periodic point if and only if $\mathfrak{x}$ is a periodic point. Also $\{\mathsf{P}_i:1\leq i\leq4\}$ is the collection
of all its periodic points. Moreover $(\mathcal{G}_\mathbb{U},\mathbb{U})$ does not have any periodic point, but
$\{<s,t>\in\mathbb{U}:\{s,t\}\cap\{0,1\}\neq\varnothing\}$ is the collection of its almost periodic points.
\end{note}
\section{Computing $P_h(\mathbb{A})$, $P_h(\mathbb{O})$ and $P_h(\mathbb{U})$}
\noindent Now we are ready to find out $P_h(\mathbb{A})$, $P_h(\mathbb{O})$ and $P_h(\mathbb{U})$. We show 
$P_h(\mathbb{A})=\{n:n\geq5\}\cup\{+\infty\}$, $P_h(\mathbb{O})=\{n:n\geq4\}\cup\{+\infty\}$ and $P_h(\mathbb{U})=\{n:n\geq1\}\cup\{+\infty\}$.
\begin{theorem}\label{salam40}
$h(\mathcal{G}_\mathbb{A},\mathbb{A})=5$, $h(\mathcal{G}_\mathbb{O},\mathbb{O})=4$, $h(\mathcal{G}_\mathbb{U},\mathbb{U})=1$.
\end{theorem}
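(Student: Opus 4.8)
The theorem asks for the three specific values
$h(\mathcal{G}_\mathbb{A},\mathbb{A})=5$,
$h(\mathcal{G}_\mathbb{O},\mathbb{O})=4$ and
$h(\mathcal{G}_\mathbb{U},\mathbb{U})=1$. Recall from the introduction that for any transformation group one has $h(G,X)=\mathrm{card}(\{\overline{Gy}:y\in X\})-1$ when $\{\overline{Gy}:y\in X\}$ is finite. So the plan is simply: for each of the three phase spaces, the orbit space $\frac{X}{\mathcal{G}_X}$ was already computed in Theorem~\ref{salam30}, and the height is controlled by the poset of \emph{closures} of orbits under inclusion. The subtle point — and the only real work — is that $h$ is defined as the longest chain of \emph{distinct closed invariant sets} $\varnothing\neq D_0\subsetneq\cdots\subsetneq D_n=X$, which is \emph{not} the number of orbits minus one in general: it equals $\mathrm{card}(\{\overline{\mathcal{G}_Xy}\})-1$, i.e. one must pass to orbit \emph{closures} and count the distinct ones. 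Thus the heart of the argument is to compute, for each space, the set $\{\overline{\mathcal{G}_Xy}:y\in X\}$ and verify it has exactly $6$, $5$, and $2$ elements respectively.

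\emph{The $\mathbb{U}$ case ($h=1$).} By Theorem~\ref{salam30}, $\frac{\mathbb{U}}{\mathcal{G}_\mathbb{U}}$ has the two orbits $(0,1)\times(0,1)$ and the topological boundary $\mathbb{U}\setminus((0,1)\times(0,1))$. The latter is already closed, and the closure of the open square is all of $\mathbb{U}$. So the distinct orbit closures are the boundary and $\mathbb{U}$ itself — exactly two of them, giving the chain $\text{bdry}\subsetneq\mathbb{U}$ and $h=2-1=1$. I would state this, check that no third closed invariant set can be inserted (any closed invariant set is a union of orbit closures, so the lattice it generates has these two as its only nontrivial elements), and conclude.

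\emph{The $\mathbb{O}$ and $\mathbb{A}$ cases ($h=4,5$).} Here I would take the five (resp.\ six) orbits listed in Theorem~\ref{salam30} and compute each orbit's closure in the appropriate topology. For $\mathbb{O}$ the five orbits are $\{\mathsf P_1,\mathsf P_3\}$, $\{\mathsf P_2,\mathsf P_4\}$, $\mathsf L_1\cup\mathsf L_3$, $\mathsf L_2\cup\mathsf L_4$, and $(0,1)\times(0,1)$; for $\mathbb{A}$ there is the additional splitting of the diagonal into $\{\mathsf P_1,\mathsf P_3\}$, $\Delta\setminus\{\mathsf P_1,\mathsf P_3\}$ and the off-diagonal interior $((0,1)\times(0,1))\setminus\Delta$. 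The task is to verify that in each case the orbit closures are \emph{already} in bijection with the orbits — i.e. no two distinct orbits have the same closure and the closures form a chain reaching $X$ — so that the count of distinct orbit closures is $5$ (resp.\ $6$) and $h=4$ (resp.\ $5$). Concretely I would compute closures such as $\overline{\mathsf L_1\cup\mathsf L_3}=\mathsf L_1\cup\mathsf L_3\cup\{\mathsf P_1,\mathsf P_2,\mathsf P_3,\mathsf P_4\}$ and $\overline{(0,1)\times(0,1)}=X$ in each topology, then exhibit an explicit maximal chain of closed invariant sets of the claimed length and argue no longer chain exists because every closed invariant set is a union of these finitely many orbits.

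\emph{Main obstacle.} The genuinely delicate step is computing the orbit closures correctly in the nonstandard topologies of $\mathbb{A}$ and $\mathbb{O}$, since these are non-metrizable and the closure of, say, a line segment or the off-diagonal interior behaves differently than Euclidean intuition suggests — in $\mathbb{A}$ the closure of $((0,1)\times(0,1))\setminus\Delta$ must be checked to be all of $\mathbb{A}$, and one must confirm the four ``corner/edge'' orbits nest into a chain rather than forming an antichain. Once the closures are pinned down, the counting and the chain-length maximality are routine, since the finite orbit space forces every closed invariant set to be a union of orbit closures, bounding the chain length by the number of distinct orbit closures.
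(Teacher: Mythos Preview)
Your approach is essentially the same as the paper's: the paper's entire proof is ``Use Theorem~\ref{salam30}'', relying on the formula $h(G,X)=\mathrm{card}(\{\overline{Gx}:x\in X\})-1$ stated in the introduction (with reference to~\cite{golestani}) together with the orbit-space computations of Theorem~\ref{salam30}. You have correctly identified that the real content is checking that distinct orbits have distinct closures in each of the three topologies, a verification the paper leaves entirely to the reader. Your outline for carrying this out is sound.

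One small correction: you write that one should check ``the closures form a chain reaching $X$''. This is neither true nor needed. In $\mathbb{O}$, for instance, $\overline{\{\mathsf P_1,\mathsf P_3\}}=\{\mathsf P_1,\mathsf P_3\}$ and $\overline{\{\mathsf P_2,\mathsf P_4\}}=\{\mathsf P_2,\mathsf P_4\}$ are incomparable, and in $\mathbb{A}$ the closure $\overline{\Delta\setminus\{\mathsf P_1,\mathsf P_3\}}=\Delta$ is incomparable with $\overline{\mathsf L_1\cup\mathsf L_3}$. The formula $h=k-1$ holds whenever there are $k$ distinct orbit closures, regardless of their order structure: for the lower bound one takes any linear extension $C_1,\ldots,C_k$ of the inclusion poset and sets $D_i=C_1\cup\cdots\cup C_{i+1}$; for the upper bound one uses exactly the argument you sketch (each strict inclusion in a chain of closed invariant sets produces a new orbit closure). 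So drop the chain remark and simply verify distinctness of the six, five, and two closures --- the rest follows from the general formula.
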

\begin{proof}
Use Theorem~\ref{salam30}.
\end{proof}
\begin{theorem}
$P_h(\mathbb{A})=\{n:n\geq5\}\cup\{+\infty\}$, $P_h(\mathbb{O})=\{n:n\geq4\}\cup\{+\infty\}$, $P_h(\mathbb{U})=\{n:n\geq1\}\cup\{+\infty\}$.
\end{theorem}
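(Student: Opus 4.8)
\section*{Proof proposal}

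The plan is to prove, for each $X\in\{\mathbb A,\mathbb O,\mathbb U\}$, an equality of the shape $P_h(X)=\{n:n\ge m_X\}\cup\{+\infty\}$ with $m_{\mathbb A}=5,\ m_{\mathbb O}=4,\ m_{\mathbb U}=1$, by handling three tasks. First, \emph{no height below $m_X$ occurs}: since $\min P_h(X)=h(\mathcal G_X,X)$ and, by Theorem~\ref{salam40}, these minima are $5,4,1$, every $h(G,X)$ is $\ge m_X$. Second, \emph{$+\infty\in P_h(X)$}: for $G=\{e\}$ each orbit is a singleton, so $\{\overline{Gx}:x\in X\}$ is infinite and $h(\{e\},X)=+\infty$. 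Third, and this is the main work, for every finite $n\ge m_X$ I must produce a subgroup $G\le\mathcal G_X$ with $h(G,X)=n$. Throughout I use $h(G,X)=\operatorname{card}\{\overline{Gx}:x\in X\}-1$, so it suffices to build, for each target $n$, a subgroup with exactly $n+1$ distinct orbit closures.

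For $\mathbb U$ the construction is clean: fix $n-1$ distinct interior points $p_1,\dots,p_{n-1}\in(0,1)\times(0,1)$ and let $G$ be their common stabiliser. Since the self-homeomorphism group of the disk acts transitively on the complement of a finite set and on the boundary $B:=\mathbb U\setminus((0,1)\times(0,1))$, the orbits are the singletons $\{p_i\}$, the dense orbit $(0,1)^2\setminus\{p_1,\dots,p_{n-1}\}$ (closure $\mathbb U$), and $B$; the distinct closures are the $n-1$ singletons, $B$ and $\mathbb U$, giving $h(G,\mathbb U)=n$ for every $n\ge1$. For $\mathbb O$ I use the normal form of Theorem~\ref{salam25}, $\langle s,t\rangle\mapsto\langle\theta(s),\mu_s(t)\rangle$. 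Fix $m$ interior levels $0<c_1<\dots<c_m<1$ symmetric about $\tfrac12$ and let $G$ be all homeomorphisms with $\mu_s(c_j)=c_j$ for $s\in(0,1)$ and all $j$, with $\mu_0,\mu_1$ left free, together with $\varphi$. Because fibre interiors are open in the order topology, freeing $\mu_0,\mu_1$ keeps $\mathsf L_1\cup\mathsf L_3$ a single orbit, so the four boundary orbits of Theorem~\ref{salam30} are untouched, while the $c_j$ cut the interior into $m$ level curves and $m+1$ bands which $\varphi$ pairs into exactly $m+1$ interior orbits; one checks their closures are nested and pairwise distinct, so there are $m+5$ orbit closures and $h(G,\mathbb O)=m+4$. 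Letting $m=n-4$ realises every $n\ge4$.

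For $\mathbb A$ one attempts the same idea through Note~\ref{salam27}, but the diagonal constraint $\mu_s(s)=\theta(s)$ obstructs it: a fixed horizontal level $t=c$ meets $\Delta$ at $\langle c,c\rangle$ and forces $\theta(c)=c$, so any restriction of $\mu$ also restricts $\theta$ and splits the fibre structure. The subgroups are therefore organised around the index-two decomposition $\mathcal G_{\mathbb A}=H\sqcup\varphi H$, where $H$ is the totally order preserving subgroup, whose height is a finite value read off from its eleven orbits. The large heights (and, in the limit, $+\infty$) are realised by intersecting $H$ with the stabiliser of a nested finite set of first coordinates $a_1<\dots<a_k$ and re-adjoining $\varphi$, each choice giving a computable, unbounded count of orbit closures; the few small values $6,7,8,9$ are obtained from more delicate subgroups that split a single boundary orbit (for instance $\mathsf L_1\cup\mathsf L_3$ through a fixed point of $\mu_0$) while retaining $\varphi$.

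The hard part is exactly this last case, for two reasons. First, homeomorphism verification: Note~\ref{salam27} gives only the normal form of a homeomorphism of $\mathbb A$, not its converse, so for each candidate subgroup one must check against the exotic basis of $\mathbb A$ which data $(\theta,(\mu_s)_s)$ genuinely define self-homeomorphisms --- in particular whether the orientation of $\mu_s$ may jump across interior fibres, which the fibre-separation of off-diagonal points appears to permit and which is what supplies the subgroups needed for the odd increments. Second, absence of gaps: since the only symmetry beyond $H$ is the involution $\varphi$, one cannot split a single orbit into two without either descending into $H$ (forcing a large height) or splitting a line orbit through a fixed point, which raises the orbit count by two; hitting every value of $\{n:n\ge5\}$ thus depends on tracking coincidences among orbit closures in the Alexandroff topology, so that a nominal $+2$ in orbits collapses to $+1$ in distinct closures. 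Carrying out these orbit-closure computations rigorously, and verifying that the resulting counts leave no gap, is the crux; the statements for $\mathbb U$ and $\mathbb O$ follow from the explicit families above.
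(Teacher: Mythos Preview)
Your treatment of $\mathbb U$ coincides with the paper's: both fix finitely many interior points and take the common stabiliser. Your treatment of $\mathbb O$ is genuinely different and in some ways tidier. The paper does not use your ``interior level curves'' family; instead it works on the boundary fibre $\{0\}\times(0,1)$, building a menagerie of subgroups --- the index-two subgroup $\mathcal H_{\mathbb O}=\{f:f(\mathsf P_1)=\mathsf P_1\}$, the stabilisers $\mathcal J_0$ of $n$ points on $\mathsf L_1$, a further refinement $\mathcal J_1$ preserving a countable discrete set on $\mathsf L_1$, and three ad~hoc groups $\mathcal J_2,\mathcal J_3,\mathcal J_4$ --- whose heights are asserted to be $8$, $8+2n$, $9+2n$, $6$, $5$, $7$ respectively. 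Your single $m$-parameter family (level curves symmetric about $\tfrac12$, together with $\varphi$) realises every height $m+4$ at once; the trade-off is that you must verify carefully that pairing under $\varphi$ gives exactly $m+1$ interior orbits and that all $m+5$ closures are pairwise distinct in the order topology, whereas the paper's boundary-based subgroups keep the interior orbit $(0,1)\times(0,1)$ intact and only modify $\mathsf L_1$.

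The genuine gap is $\mathbb A$, and you flag it yourself. The paper does not ``organise subgroups around the index-two decomposition'' in the way you outline; it gives five explicit subgroups, all defined by constraints on the boundary fibres $\{0\}\times[0,1]$ and $\{1\}\times[0,1]$ where the diagonal constraint $\mu_s(s)=\theta(s)$ is vacuous. Concretely: $\mathcal H_{\mathbb A}=\{f:f(\mathsf P_1)=\mathsf P_1\}$ has nine orbits (not eleven --- note that $\mathsf L_2\cup\mathsf L_4$ remains a single $\mathcal H_{\mathbb A}$-orbit because one may flip individual interior fibres), giving height $8$; fixing $n$ points $<0,t_1>,\ldots,<0,t_n>$ on $\mathsf L_1$ yields heights $8+2n$; further preserving a countable set on $\{0\}\times(0,t_1)$ adds one more to get $9+2n$; and two $\varphi$-symmetric subgroups preserving either $\{<0,\tfrac12>,<1,\tfrac12>\}$ or a $\varphi$-invariant countable subset of $\mathsf L_1\cup\mathsf L_3$ realise heights $7$ and $6$. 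Your proposal to stabilise interior first coordinates $a_1<\cdots<a_k$ runs into exactly the obstruction you identify: fixing a whole fibre $\{a_i\}\times[0,1]$ forces $\theta(a_i)=a_i$ and drags the diagonal point $\langle a_i,a_i\rangle$ along, splitting both $\Delta\setminus\{\mathsf P_1,\mathsf P_3\}$ and $\mathsf L_2\cup\mathsf L_4$ simultaneously, so the height jumps in uncontrolled increments. The paper sidesteps this by never touching interior fibres; all its constraints live on $\mathsf L_1\cup\mathsf L_3$, where there is no diagonal to worry about. Your worries about verifying continuity of flipped-fibre maps against the Alexandroff basis and about ``coincidences among orbit closures'' are therefore moot in the paper's construction, but you would still need to supply the explicit small-height subgroups to close the argument.
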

\begin{proof}
{\it Computing $P_h(\mathbb{A})$.} By Theorem~\ref{salam40}, it's evedent that $5\in P_h(\mathbb{A})\subseteq\{n:n\geq5\}\cup\{+\infty\}$.
For $n\geq1$ choose 
$t_1,\ldots,t_n\in(0,1)$ with $\frac12=t_1<\cdots,t_n$ and let
\begin{eqnarray*}
\mathcal{H}_{\mathbb A} & := & \{f\in\mathcal{G}_{\mathbb A}:f(\mathsf{P}_1)=\mathsf{P}_1\} \\
\mathcal{K}_0 & := & \{f\in\mathcal{G}_{\mathbb A}:f<0,t_1>=<0,t_1>,\ldots,f<0,t_n>=<0,t_n>\}(\subseteq\mathcal{H}_{\mathbb A}) \\
\mathcal{K}_1 & := & \{f\in\mathcal{K}_0:f(\{<0,\frac1j>:j\geq2\}\cup\{<0,\frac12-\frac1j>:j\geq3\})= \\
	& & \SP\SP\SP\SP\SP\SP \{<0,\frac1j>:j\geq2\}\cup\{<0,\frac12-\frac1j>:j\geq3\}\} \\
\mathcal{K}_2 & := & \{f\in\mathcal{G}_{\mathbb A}:f(\{<0,\frac12>,<1,\frac12>\})=\{<0,\frac12>,<1,\frac12>\}\} \\
\mathcal{K}_3 & := & \{f\in\mathcal{G}_{\mathbb A}:f(\{<i,\frac1j>:j\geq2,i=0,1\}\cup\{<i,1-\frac1j>:j\geq2,i=0,1\})= \\
	& & \SP\SP\SP\SP\SP\SP \{<i,\frac1j>:j\geq2,i=0,1\}\cup\{<i,1-\frac1j>:j\geq2,i=0,1\}\} 
\end{eqnarray*}
Then $\mathcal{H}_{\mathbb A}$
is a proper normal subgroup of $\mathcal{G}_{\mathbb A}$ with index 2 and $\mathcal{G}_{\mathbb A}=\mathcal{H}_{\mathbb A}\cup
\varphi\mathcal{H}_{\mathbb A}$ (where $\varphi<s,t>=<1-s,1-t>$). Moreover using a similar method described in 
Theorem~\ref{salam30} we have:
\begin{eqnarray*}
\frac{\mathbb{A}}{\mathcal{H}_{\mathbb{A}}} & = & 
	\{\{\mathsf{P}_1\},\{\mathsf{P}_2\},\{\mathsf{P}_3\},\{\mathsf{P}_4\},\mathsf{L}_1,\mathsf{L}_3,
	\mathsf{L}_2\cup\mathsf{L}_4,\Delta\setminus\{\mathsf{P}_1,\mathsf{P}_3\},((0,1)\times(0,1))\setminus\Delta\} \\
\frac{\mathbb{A}}{\mathcal{K}_0} & = & (\frac{\mathbb{A}}{\mathcal{H}_{\mathbb{A}}}\setminus\{\mathsf{L}_1\})\cup
	\{\{<0,t_1>\},\ldots.\{<0,t_n>\}, \\
	& & \SP \{0\}\times(0,t_1), \{0\}\times(t_1,t_2),\ldots,\{0\}\times(t_{n-1},t_n),\{0\}\times(t_n,1)\} \\
\frac{\mathbb{A}}{\mathcal{K}_1} & = & (\frac{\mathbb{A}}{\mathcal{K}_0}\setminus\{\{0\}\times(0,t_1)\})\cup\{\{<0,\frac1j>:j\geq2\}\cup\{<0,\frac12-\frac1j>:j\geq3\}, \\
	& & \SP \{0\}\times((0,t_1)\setminus\{\frac1j:j\geq2\}\cup\{\frac12-\frac1j:j\geq3\})\} \\
\frac{\mathbb{A}}{\mathcal{K}_2} & = & (\frac{\mathbb{A}}{\mathcal{G}_{\mathbb A}}\setminus\{\mathsf{L}_1\cup\mathsf{L}_3\})\cup \{\{<0,\frac12>,<1,\frac12>\}, \\
	& & \SP (\{0\}\times(0,\frac12))\cup(\{1\}\times(\frac12,1)),(\{0\}\times(\frac12,1))\cup(\{1\}\times(0,\frac12))\} \\
\frac{\mathbb{A}}{\mathcal{K}_3} & = & (\frac{\mathbb{A}}{\mathcal{G}_{\mathbb A}}\setminus\{\mathsf{L}_1\cup\mathsf{L}_3\})\cup \\
	& & \SP \{\{<i,\frac1j>:j\geq2,i=0,1\}\cup\{<i,1-\frac1j>:j\geq2,i=0,1\}, \\
	& & \SP (\mathsf{L}_1\cup\mathsf{L}_3)\setminus(\{<i,\frac1j>:j\geq2,i=0,1\}\cup\{<i,1-\frac1j>:j\geq2,i=0,1\})\} 
\end{eqnarray*}
which leads to $h(\mathcal{H}_{\mathbb A},\mathbb{A})=8$, $h(\mathcal{K}_0,\mathbb{A})=8+2n$, 
$h(\mathcal{K}_1,\mathbb{A})=8+2n+1$, 
$h(\mathcal{K}_2,\mathbb{A})=7$, $h(\mathcal{K}_3,\mathbb{A})=6$, $h(\{id_{\mathbb A}\},\mathbb{A})=+\infty$.
Hence $P_h(\mathbb{A})=\{n:n\geq5\}\cup\{+\infty\}$.
\\
{\it Computing $P_h(\mathbb{O})$.} By Theorem~\ref{salam40}, it's evedent that $4\in P_h(\mathbb{O})\subseteq\{n:n\geq4\}\cup\{+\infty\}$.
For $n\geq1$ choose 
$t_1,\ldots,t_n\in(0,1)$ with $\frac12=t_1<\cdots,t_n$ and let
\begin{eqnarray*}
\mathcal{H}_{\mathbb O} & := & \{f\in\mathcal{G}_{\mathbb O}:f(\mathsf{P}_1)=\mathsf{P}_1\} \\
\mathcal{J}_0 & := & \{f\in\mathcal{G}_{\mathbb O}:f<0,t_1>=<0,t_1>,\ldots,f<0,t_n>=<0,t_n>\}(\subseteq\mathcal{H}_{\mathbb O}) \\
\mathcal{J}_1 & := & \{f\in\mathcal{J}_0:f(\{<0,\frac1j>:j\geq2\}\cup\{<0,\frac12-\frac1j>:j\geq3\})= \\
	& & \SP\SP\SP\SP\SP\SP \{<0,\frac1j>:j\geq2\}\cup\{<0,\frac12-\frac1j>:j\geq3\}\} \\
\mathcal{J}_2 & := & \{f\in\mathcal{G}_{\mathbb O}:f(\{<0,\frac12>,<1,\frac12>\})=\{<0,\frac12>,<1,\frac12>\}\} \\
\mathcal{J}_3 & := & \{f\in\mathcal{G}_{\mathbb O}:f(\{<i,\frac1j>:j\geq2,i=0,1\}\cup\{<i,1-\frac1j>:j\geq2,i=0,1\})= \\
	& & \SP\SP\SP\SP\SP\SP \{<i,\frac1j>:j\geq2,i=0,1\}\cup\{<i,1-\frac1j>:j\geq2,i=0,1\}\} \\
\mathcal{J}_4 & := & \{f\in\mathcal{G}_{\mathbb O}:f(\{\frac12\}\times(0,1))=\{\frac12\}\times(0,1)\}
\end{eqnarray*}
One can verify $h(\mathcal{H}_{\mathbb O},\mathbb{O})=8$, $h(\mathcal{J}_0,\mathbb{O})=8+2n$, 
$h(\mathcal{J}_1,\mathbb{O})=8+2n+1$, 
$h(\mathcal{J}_2,\mathbb{O})=6$, $h(\mathcal{J}_3,\mathbb{A})=5$, $h(\mathcal{J}_4,\mathbb{O})=7$, $h(\{id_{\mathbb O}\},\mathbb{O})=+\infty$.
Hence $P_h(\mathbb{A})=\{n:n\geq4\}\cup\{+\infty\}$.
\\
{\it Computing $P_h(\mathbb{U})$.} By Theorem~\ref{salam40}, it's evedent that $1\in P_h(\mathbb{U})\subseteq\{n:n\geq1\}\cup\{+\infty\}$.
For $n\geq1$ choose distinct
$\mathfrak{x}_1,\ldots,\mathfrak{x}_n\in(0,1)\times(0,1)$, so $h(\{f\in\mathcal{G}_\mathbb{U}:f(\mathfrak{x}_1)=\mathfrak{x}_1,\ldots,f(\mathfrak{x}_n)=\mathfrak{x}_n\},\mathbb{U})=n+1$ and $h(\{id_{\mathbb U}\},\mathbb{U})=+\infty$ which completes the proof.
\end{proof}
\section*{Acknowledgement}
\noindent The authors are grateful to the research division of the University of Tehran
 for the grant which supported this research.

\noindent {\small 
{\bf Fatemah Ayatollah Zadeh Shirazi},
Faculty of Mathematics, Statistics and Computer Science,
College of Science, University of Tehran,
Enghelab Ave., Tehran, Iran
\\
({\it e-mail}: fatemah@khayam.ut.ac.ir)
\\
{\bf Fatemeh Ebrahimifar},
Faculty of Mathematics, Statistics and Computer Science,
College of Science, University of Tehran ,
Enghelab Ave., Tehran, Iran
\\
({\it e-mail}: ebrahimifar64@ut.ac.ir)
\\
{\bf Reza Yaghmaeian},
Faculty of Mathematics, Statistics and Computer Science,
College of Science, University of Tehran,
Enghelab Ave., Tehran, Iran
\\
({\it e-mail}: rezayaghma@yahoo.com)
\\
{\bf Hamed Yahyaoghli},
Department of Mathematics, Tarbiat Modares University,
Modiriat Bridge, Tehran, Iran
\\
({\it e-mail}: yahyaoghli@gmail.com)}

\end{document}